\documentclass[11pt]{amsart}
\usepackage{comment}

\usepackage[pdfencoding=auto, psdextra, hidelinks, pdfusetitle]{hyperref}
\usepackage{preamble/packages}
\usepackage{preamble/environments}
\usepackage{preamble/notation}
\usepackage[margin=2.9cm]{geometry}

\graphicspath{{./image/}}
\author{Vasily Krylov, Raphaël Paegelow, Pavel Shlykov}
\title{K-theory of Gieseker variety and type A cyclotomic Hecke algebra}

\begin{document}

\maketitle
\setcounter{tocdepth}{1}
  \tableofcontents

  \section{Introduction}\label{sec:Intro}
  


\subsection{ADHM spaces and Gieseker varieties} So-called ADHM spaces \cite{ADHM} form a fundamental class of algebraic varieties that were used to describe framed $U(r)$-instantons on $S^4=\mathbb{R}^4 \cup \{\infty\}$ by means of linear algebra. 
The ADHM space has a natural partial compactification which is a non-singular algebraic variety.  
Following \cite{Nek03}, we will call it the \textit{Gieseker space} or the \textit{Gieseker variety} and denote $\Gis$. It also admits a description as a moduli space of framed torsion-free sheaves on $\BP^2(\BC)$ of rank $r$ and  second Chern class $n$ \cite{Nakajima-Yoshioka-instanton-counting}. This variety has been used extensively by physicists. For instance, \cite{Nek03} defined instanton partition functions as equivariant localizations on the Gieseker space. Nakajima quiver varieties \cite{Nak1994} are a natural generalization of these spaces.

\subsection{Relation to Higgs/Coulomb branches and symplectic duality} The Gieseker space is a symplectic resolution of the singularities of the {\emph{Higgs}} branch of the $3d$ $\mathcal{N}=4$ quiver gauge theory corresponding to the Jordan quiver.
In \cite{KS25}, the first and third authors proposed a general approach that allowed them to identify equivariant cohomology of the Gieseker space with the algebra of functions on the schematic $\mathbb{C}^\times$-fixed points of the corresponding BFN Coulomb branch (\cite{BFNII}) thus proving the non-quantized equivariant Hikita conjecture relating Higgs and Coulomb branches (see, for example, \cite[Section 5.6]{kamnitzer_symplectic-duality}, where the quantized version is formulated). In \cite{KS25}, first and third authors used a similar approach to identify the equivariant cohomology of the Gieseker space with the center of the degenerate cyclotomic Hecke algebra resulting in a new simple proof of the main result of \cite{SVV}, see also \cite{Web15}.  

\subsection{Generalization to $K$-theory} It's natural to replace the equivariant cohomology of the Gieseker space by the equivariant $K$-theory.
A $K$-theoretic version of the Hikita conjecture was formulated in \cite{dumanski-krylov_2025} (see also \cite[Appendix B]{Zhou23-Virtual-Coulomb-branch}) and one could ask whether it holds for the Gieseker space and whether the equivariant $K$-theory of this space allows for a description in terms of cyclotomic Hecke algebras, similar to \cite{SVV}, \cite{Web15} and \cite{KS25}.

\subsection{Main result} 
The main goal of this paper is to apply the approach of \cite{KS25} to obtain  an algebraic description of the equivariant $K$-theory of the Gieseker space (see \cref{thm:main theorem formulated in introduction} below) which is precisely the $K$-theoretic analog of the main theorem of \cite{SVV}. 

\vspace*{0.3cm}

Initially, the Iwahori-Hecke algebra is a deformation of the group algebra of the symmetric group. The cyclotomic Hecke algebra is a quotient of the affine Hecke algebra of the symmetric group introduced for all complex reflection groups by Broué and Malle \cite{BM93}. At the same time, Ariki and Koike defined a Hecke algebra for the complex reflection group $G(r,1,n)$ that  generalizes the Iwahori-Hecke algebra of types A and B \cite{ArikiKoike}. This algebra turned out to be the cyclotomic Hecke algebra for affine type A.  
In \cite{Pae26}, the second author described combinatorial correspondences between a fixed point locus of the Gieseker space and blocks of a certain specialized Ariki-Koike algebra.

Here we propose a common framework to generalize these observations. 
The main theorem of this paper is:
\begin{customthm}{A}[\cref{thm:main}]\label{thm:main theorem formulated in introduction}
The equivariant $K$-theory of the Gieseker space is isomorphic to the Jucys-Murphy's center of the cyclotomic Hecke algebra, as an algebra over the equivariant $K$-theory of a point. 
\end{customthm}
In particular, we obtain a basis in the Jucys-Murphy's center coming from the geometry (see  \cref{rem:basis in JM center}).

\subsection{Relation to the center conjecture}
The following conjecture was formulated in \cite[Conjecture 3.1]{McG12} and is still open in general (see Section~\ref{subsec:main conjecture} for the list of cases when the conjecture is known).

\begin{conj}\label{conj in the introduction: JM ceneter is center}
For $q \neq 1$, the Jucys-Murphy's center of the cyclotomic Hecke algebra coincides with its center. 
\end{conj}

In \cite[Lemma 7.1.7]{GeckPeiffer} (see also \cite[Equation (2.5)]{HuShi}) it's proven that the center of the cyclotomic Hecke algebra is {\emph{dual}} to the cocenter under a certain natural pairing. Note that the cocenter is a ``simpler'' object than the JM-center from the algebraic point of view as, for example, it is known to have an explicit basis (see \cite[Theorem 1.4.1]{HuShi}). Note now that the equivariant $K$-theory of the Gieseker space pairs perfectly with the equivariant $K$-theory of a certain closed projective subvariety of the Gieseker space that we will denote by $\mathcal{L}(n,r)$ (this is the general feature of Nakajima quiver varieties, see \cite[Theorem 7.3.5]{NakJAMS}). So, Theorem \ref{thm:main theorem formulated in introduction} implies that the Jucys-Murphy's center is {\emph{dual}} to the equivariant $K$-theory of $\mathcal{L}(n,r)$. We conclude that Conjecture \ref{conj in the introduction: JM ceneter is center} is equivalent to the following conjecture (compare with \cite[Theorem 3.37]{SVV}).

\begin{conj}\label{conj in the introduction: K theory of L is cocenter}
There exists an isomorphism between the  equivariant $K$-theory of $\mathcal{L}(n,r)$ and  the cocenter of the cyclotomic Hecke algebra compatible with the natural pairings. 
\end{conj}

So, in a sense, one can reduce Conjecture \ref{conj in the introduction: JM ceneter is center} to the statement similar to the main theorem we prove in this paper. Alternatively, if one proves Conjecture \ref{conj in the introduction: JM ceneter is center} then Conjecture \ref{conj in the introduction: K theory of L is cocenter} would follow immediately. 
At the moment, we do not know how to prove Conjecture \ref{conj in the introduction: K theory of L is cocenter} (see Section \ref{sec:center-cocenter} for more details).

\subsection{Corollaries of the main result}
In \cite{Pae26} second author identifies components of fixed points in Gieseker space at roots of unity (also described as affine type $A$ quiver varieties)  with blocks of the cyclotomic Hecke algebra (at the same roots of unity). After specializing the parameters to roots of unity, we naturally recover the aforementioned bijection observed in \cite{Pae26} as a corollary of Theorem \ref{thm:main theorem formulated in introduction}.
Moreover, up to \cref{conj in the introduction: JM ceneter is center}, we obtain the following upgrade of \cite{Pae26}.  


\begin{customthm}{B}[\cref{thm:blocks}]
 The $K$-theory of an affine type A quiver variety 
 is isomorphic, as a $\C$-algebra, to the center of the corresponding block of the specialized cyclotomic Hecke algebra.    
\end{customthm}

\begin{rmk}
A related result with $K$-theory being replaced by cohomology was obtained by Webster in \cite[Proposition 3.9]{Web15} by using different methods. Webster identifies (equivariant) cohomology of affine type $A$ quiver varieties with centers of certain KLR-algebras. 
\end{rmk}


For $r=1$, the Gieseker space $\mathcal{G}(n,r)$ becomes isomorphic to the Hilbert scheme $\operatorname{Hilb}_n(\mathbb{A}^2)$ of $n$ points on $\mathbb{A}^2$. In this case, the affine cyclotomic Hecke algebra becomes equal to the Hecke algebra of type $A$. Moreover, by \cite[Theorem 7.2]{FG06} for $q \neq 1$ (i.e., after localizing $q-1$) the Jucys-Murphy’s-center of the type $A$ Hecke algebra identifies with its center. 
So, Theorem \ref{thm:main theorem formulated in introduction} in this case claims the following. 
\begin{specialcorollary}{C}[\cref{cor:Hilb case}]
The equivariant $K$-theory of the Hilbert scheme $\operatorname{Hilb}_n(\mathbb{A}^2)$ is isomorphic to the Jucys-Murphy’s-center of the type $A$ Hecke algebra. For $q \neq 1$, this is also the center. 
\end{specialcorollary}

\begin{warning}
What we call Jucys-Murphy’s center of the type $A$ Hecke algebra is not equal to what is called by the same name in \cite[Theorem 7.2]{FG06}. In their notation, our Jucys-Murphy’s center is generated by $\mathcal{L}_i$'s while their Jucys-Murphy’s center is generated by $L_i$ such that $\mathcal{L}_i=(q-1)L_i+1$. They coincide for $q \neq 1$ but their  specializations to $q=1$ are different. 
\end{warning}



\subsection{Structure of the paper}
This paper is structured as follows.
In \cref{sec:Preliminaries} we recall the definition and some facts about Gieseker space. In the \cref{sec:K-theory_of_Gieseker_space} 
we prove \cref{thm:main theorem formulated in introduction}. 
In \cref{sec:center-cocenter} we 
reformulate \cref{conj in the introduction: JM ceneter is center} in terms of the identification between the cocenter $\operatorname{Tr}(\Heck_{n,r})$ of $\Heck_{n,r}$ and the equivariant $K$-theory of $\mathcal{L}(n,r)$   and prove that the former embeds into the latter.
Finally, 
 in \cref{sec:Applications} we obtain various corollaries of the \cref{thm:main theorem formulated in introduction}. Namely, in the  special case of $r=1$, we identify the equivariant $K$-theory of $\operatorname{Hilb}_n(\mathbb{A}^2)$ with the Jucys-Murphy’s  center of the type $A$ Hecke algebra. For general $r$ we then proceed to study the special cases $q=1$ and $q$ begin a root of unity. For the latter we prove \cref{thm:blocks}.
\subsection*{Acknowledgements}
We are grateful to Alexander Braverman who drew the attention of the first and third authors to the question about the algebraic description of the equivariant $K$-theory of a Hilbert scheme. The first author was supported by the Simons Foundation Award 888988 as part of the Simons Collaboration on Global Categorical Symmetries.
The second author is supported by the SFB-TRR 195 “Symbolic Tools in Mathematics and their Application” of the German Research Foundation (DFG). We are also grateful to Jun Hu and Lei Shi for the useful correspondences on the draft of the paper. Third author was supported by the Engineering and Physical Sciences Research Council [grant number EP/W013053/1].  


  \section{Preliminaries}\label{sec:Preliminaries}
  By an algebraic variety, we mean an integral scheme over $\C$, with a structure morphism that is assumed to be separated and of finite type over $\C$. 


\subsection{Gieseker space}

Let $V:= \BC^n, W:=\BC^r$ be two vector spaces. Denote by $\repspace$ the space
$  \repspace:=\Hom(V,V) \oplus \Hom(W,V)$ 
and by $\cotrepspace$ its cotangent bundle:
\begin{equation}\label{eqn:cotrepspace}
    \cotrepspace:=\Hom(V,V) \oplus \Hom(V,V) \oplus \Hom(W,V) \oplus \Hom(V,W),
\end{equation}
which is the representation space of the doubled framed Jordan quiver. 

\begin{figure}[h]
  \centering

  \begin{tikzpicture}
    \draw[fill = black] (-0.7 ,0) circle (0.08);
    \draw[fill = black] (-2.5 ,0) circle (0.08) node [anchor = east] {$\infty$};
    \draw ([shift=(160 : 0.7)]0, 0) arc (160 : 0: 0.7) node [anchor = west] {$Y$};
    \draw ([shift=(-150 : 0.5)]0, 0) arc (-150 : 0: 0.5) node [anchor = east] {$X$};
    \draw (-0.9, 0.2) arc (0 : 90: 0.6 and 0.3 ) node [anchor = south] {$\gamma$};
    \draw (-2.4, -0.2) arc (-180 : -90: 0.6 and 0.3 ) node [anchor = north] {$\delta$};
    \draw[->] ([shift=(90:0.5)]-1.5, 0)  arc (90 : 170 :  0.9 and 0.3);
    \draw[->] ([shift=(-90:0.5)]-1.8, 0)  arc (-90 : 0 :  0.9 and 0.3);
    \draw ([shift=(-150 : 0.5)]0, 0) arc (-150 : 0: 0.5) node [anchor = east] {$X$};
    \draw[->] (0.5, 0)  arc (0 : 150 :  0.5);
    \draw[->] (0.7, 0)  arc (0 : -160 :  0.7);
  \end{tikzpicture}
    \caption{Double Jordan quiver}
  \label{double jordan quiver}
\end{figure}

We can represent the elements of $\cotrepspace$ as quadruples $(X,Y, \gamma, \delta)$ where each of $X,Y, \gamma, \delta$ is a map from the corresponding direct summand in \cref{eqn:cotrepspace}. The space $\cotrepspace=T^*\repspace$ carries a natural symplectic form. The group $G_{\vv}:=\GL(V)$ acts naturally on $\cotrepspace$ and the action is Hamiltonian with the moment map $\mu$ 
\begin{equation*}
\mu\colon \cotrepspace \ra \mathfrak{gl}_{V},\, (X,Y,\gamma,\delta) \mapsto [X,Y]+\gamma\delta. 
\end{equation*}

\begin{dfn}
A quadruple $(X,Y,\gamma,\delta) \in \cotrepspace$ is called {\em{stable}} if for every $X, Y$-invariant subspace $S \subset V$ such that $S$ contains $\on{Im}(\gamma)$ we have $S=V$. 
We denote by $\cotrepspace^{\mathrm{st}} \subset \cotrepspace$ the (open) subset of stable quadruples.
\end{dfn}
The action of $G_{\vv}$ on the space $\cotrepspace^{\mathrm{st}}$ is free.
\begin{dfn}
The Gieseker variety $\Gis$ is then defined as the following quotient: 
\begin{equation*}
\Gis:=\mu^{-1}(0)^{\mathrm{st}}/G_{\vv}
\end{equation*} 
\end{dfn}

For $r=1$ the variety $\Gie(n,1)$ is isomorphic to $\on{Hilb}_{n}(\BA^{2})$, the Hilbert scheme of $n$ points on $\BC^2$ (\cite[Theorem 2.1, Proposition 2.10]{NakajimaBook}).  

Gieseker variety $\Gis$ is naturally equipped with the action of the group $\GL_2\times \GL_r$ -- it descends from the action on $\cotrepspace$ as it commutes there with the action of $G_{\vv}$.

\begin{equation*}
(g,t) \cdot (X,Y,\gamma,\delta):=(aX + b Y, cX + d Y, \gamma t^{-1}, \mathrm{det}(g) t \delta)
\end{equation*}
where $g=\begin{pmatrix} a & b \\ c & d \end{pmatrix} \in \GL_2$ and $t \in \GL_r$. This action commutes with the $G_{\vv}$-action defined above and thus induces an action on $\Gie(n,r)$.

In particular we are interested in the torus part of this action. Let $\BC^\times_{\hbar}$ denote the diagonal part of $\GL_2$-torus, $\mathbb{T}$  the maximal diagonal torus of $\SL_2$ and $\BT_r$ the maximal torus of $\GL_r$. Denote $\torus := \mathbb{T}\times \BT_r$ and let $\CR:=K_{\torus}(\{*\})$, which is the ring $\C[q^{\pm 1}, Q_1^{\pm 1},\dots,Q_r^{\pm 1}]$.
Notice that $\BC^\times_{\hbar}$ scales the symplectic form while $\mathbb{T}$ and $T_r$ act symplectically.

The following is a short summary of the relevant parts of  \cite[Section 4]{KS25}.
First of all, recall a classical result  that the set $\on{Hilb}_{n}(\BA^{2})^{\torus}$ is identified with the set of partitions $\lb$ of $n$. In particular, the ideals corresponding to each fixed points are monomial in $x,y$ - coordinates on $\BC^2$. The identification above relates to each ideal the corresponding Young diagram of $\lb$
\begin{equation*}
\BY(\la)=\{(i,j)\, | \, 1 \leqslant i \leqslant l,\, 1 \leqslant j  \leqslant \lb_i\}.
\end{equation*}
We fill $\BY(\la)$ with monomials by putting $x^{i-1}y^{j-1}$ into the box $(i,j) \in \BY(\la)$.
	The ideal $J_{\la}$ that corresponds to $\la$ is spanned by the monomials outside of the diagram.

\begin{deff}\label{def:content}
The content of a node $(a,b)$  of the Young diagram $\BY(\la)$ is denoted $\ct(a,b)$ and is equal to the integer $b-a$.
\end{deff}

 Each $\torus$-fixed $J_\lb$ has a basis $(x^{i-1}y^{j-1})_{(i,j) \in \BY(\lb)}$. On the vector $x^{i}y^{j}$ the torus $\BT$ acts by $q^{\ct(i,j)}$. The torus $\BT_1$ acts trivially in this case. 

Now, consider the whole $\Gis$ for $r$ not necessarilly equal to 1. It is known \cite[page 18]{Nakajima-Yoshioka-instanton-counting} that the variety $\Gis^{\BT_r}$ is 
isomorphic to the disjoint union 
\begin{equation*}
	\bigsqcup\limits_{\sum\limits_{j=1}^{r} n_{j}=n}~\prod\limits_{j=1}^{r} \Gie({n_{j}},1)~ \simeq 
	\bigsqcup\limits_{\sum\limits_{j = 1}^{r} n_{j}=n}~\prod\limits_{j =1}^{r} \on{Hilb}_{n_j}(\BC^2).
	\end{equation*}

Denote by $\mpart$ the set of $r$-multipartitions of $n$. The identification above implies the following proposition:

\begin{prop} \label{prop:gieseker_fixed_point_structure}
	The set of fixed points $\Gis^{\torus}$
	is identified with the set $\mpart$. A multipartition $\lbb=(\la^1,\la^2,\ldots,\la^{r})$, corresponds to the following quiver data
	$ X^{\lbb},Y^{\lbb} \in \on{End}(V),\,
	 \newline \gamma^{\lbb} \in \on{Hom}(W,V),\, 
  \delta^{\lbb} \in \on{Hom}(V,W)$:
	\begin{equation*} 
	V=\bigoplus_{j=1}^{r} \BC[x,y]/J_{\la^j}, \qquad
	W= \bigoplus_{j=1}^{r} \BC w_j;
	\end{equation*}
	\begin{equation*}
	X^{\lbb} := \bigoplus_{j=1}^{r} L^{j}_{x}, \qquad
	Y^{\lbb} := \bigoplus_{j=1}^{r} L^{j}_{y};
	\end{equation*}
	\begin{equation*}
	\gamma^{\lbb} \hspace{0,1cm} \on{sends} \hspace{0,1cm} w_{j} \in W \hspace{0,1cm} \on{to} \hspace{0,1cm} 1 \hspace{0,1cm} \on{in} \hspace{0,1cm} \BC[x,y]/J_{\la^j},~
	\delta^{\lbb} := 0,
	\end{equation*}
by $L^{j}_{x},\,L^{j}_{y}$ we denote operators of multiplications by $x,y$ on $\BC[x,y]/J_{\la^j}$.
 
	\end{prop}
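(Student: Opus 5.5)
We prove \cref{prop:gieseker_fixed_point_structure} in two steps. First we reduce to the $\BT_r$-fixed locus. Then we identify the $\mathbb{T}$-fixed points on each Hilbert scheme factor.

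\textbf{Step 1: the $\BT_r$-fixed locus.} Since $\torus=\mathbb{T}\times\BT_r$, we have
\[
\Gis^{\torus}=\bigl(\Gis^{\BT_r}\bigr)^{\mathbb{T}}.
\]
By the cited description of Nakajima--Yoshioka, $\Gis^{\BT_r}\simeq\bigsqcup_{\sum n_j=n}\prod_j \on{Hilb}_{n_j}(\BC^2)$. We first make this decomposition explicit. A point $(X,Y,\gamma,\delta)$ is $\BT_r$-fixed exactly when there is a homomorphism $\rho\colon\BT_r\to G_{\vv}$ compensating the action. This uses freeness of the $G_{\vv}$-action on stable points, so that $\rho$ is unique and hence a group homomorphism. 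Then $V=\bigoplus_j V_j$ splits into weight spaces, where $V_j$ is the weight space of the $j$-th coordinate character. The maps $X,Y$ preserve each $V_j$, $\gamma$ sends $w_j$ into $V_j$, and $\delta$ maps $V_j$ to $\BC w_j$. Stability of the whole quadruple is equivalent to stability of each summand $(X|_{V_j},Y|_{V_j},\gamma|_{\BC w_j},\delta|_{V_j})$. The moment map equation also splits. So each summand is a point of $\Gie(n_j,1)$, with $n_j=\dim V_j$.

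\textbf{Step 2: $\mathbb{T}$-fixed points in each factor.} The $\mathbb{T}$-action is compatible with this product decomposition and acts factorwise. Hence the fixed points form the product of the $\mathbb{T}$-fixed loci of the factors $\Gie(n_j,1)=\on{Hilb}_{n_j}(\BA^2)$. We use two standard facts for rank one. First, $\delta=0$ on stable points of $\mu^{-1}(0)$: the usual argument shows $[X,Y]$ is nilpotent and $\delta$ kills the cyclic space generated by $\on{Im}\gamma$. Second, the point corresponds to the ideal $J=\ker(\BC[x,y]\to V,\ f\mapsto f(X,Y)\gamma(w))$, which has codimension $n_j$.

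\textbf{Main obstacle: $\mathbb{T}$-fixed ideals are monomial.} This is the one point needing care, since $\mathbb{T}\subset \SL_2$ acts with weights $q$ and $q^{-1}$ on $x$ and $y$. That action is not generic, so isolated fixed points are not automatic. We handle it as follows. A $\mathbb{T}$-fixed ideal is spanned by weight vectors. Each weight space of $\BC[x,y]$ is $\{x^ay^b : b-a=k\}$, which is infinite-dimensional. However, a finite-codimension ideal is additionally stable under the $\BC^\times_\hbar$-scaling, hence homogeneous, and each bigraded piece is one-dimensional. Alternatively we can argue directly: a $\mathbb{T}$-fixed point is a point of finite support invariant under $(x,y)\mapsto(tx,t^{-1}y)$, so it is supported at the origin. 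We expect to need to verify that the fixed locus of $\mathbb{T}$ alone is the monomial ideals, following the cited \cite[Section 4]{KS25}. Granting this, monomial ideals of colength $n_j$ correspond to partitions $\la^j$ via $\BY(\la^j)$.

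\textbf{Assembling.} Taking $V_j=\BC[x,y]/J_{\la^j}$ with $X,Y$ acting by multiplication, $\gamma(w_j)=1$ and $\delta=0$ gives exactly the stated quiver data. Stability holds because $1$ generates the quotient, and $\mu=0$ because $[L_x,L_y]=0$. This gives the bijection $\Gis^{\torus}\leftrightarrow\mpart$.
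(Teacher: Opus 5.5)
Your overall route is the paper's. The paper combines the Nakajima--Yoshioka description $\Gis^{\BT_r}\simeq\bigsqcup\prod_j\operatorname{Hilb}_{n_j}(\mathbb{C}^2)$ with the fact, quoted as classical, that the $\mathbb{T}$-fixed points of each $\operatorname{Hilb}_{n_j}(\mathbb{A}^2)$ are the monomial ideals $J_\lambda$. Your Step 1 just unpacks the first ingredient, and that part is fine.

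The problem is in the step you single out yourself. Your first argument asserts that a $\mathbb{T}$-stable ideal of finite codimension is ``additionally stable under the $\mathbb{C}^\times_\hbar$-scaling''. Nothing you have said implies this. Moreover, $\mathbb{T}\times\mathbb{C}^\times_\hbar$ is the full two-dimensional torus, so stability under it is exactly monomiality, and the argument is circular. Your second argument stops at ``supported at the origin'', and you then grant the conclusion. So, as written, the one non-formal step is not proved. Citing it as the paper does is fine, but then the faulty heuristic should be dropped.

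The gap closes easily along your second route.
\begin{itemize}
\item If $I$ is $\mathbb{T}$-stable of finite colength, its support is a finite set stable under the connected group $\mathbb{T}$, hence equal to the origin. So $A=\mathbb{C}[x,y]/I$ is local with maximal ideal $(x,y)A$, and every polynomial with nonzero constant term is invertible in $A$.
\item Since $\mathbb{T}$ acts locally finitely, $I$ is spanned by weight vectors, which have the form $x^k f(xy)$ or $y^k f(xy)$ with $f\in\mathbb{C}[u]$.
\item Write $f(u)=u^e g(u)$ with $g(0)\neq 0$. Then $g(xy)$ is a unit modulo $I$, so $x^k f(xy)\in I$ if and only if $x^{k+e}y^e\in I$, and similarly with $y$.
\item Hence $I$ is generated by monomials, i.e.\ $I=J_\lambda$ for a partition $\lambda$.
\end{itemize}

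An alternative argument: at $J_\lambda$ the $\mathbb{T}$-characters of the tangent space are $q^{\pm h(s)}$, where $h(s)\geq 1$ are the hook lengths. The $\mathbb{T}$-fixed locus is smooth, lies in the proper punctual Hilbert scheme, and is stable under the full torus. By Borel's fixed point theorem each of its components contains some $J_\lambda$, where its tangent space is zero, so the component is that point.

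With either argument inserted, your proof is complete and agrees with the paper's.
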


The following is a rephrasing of \cite[Lemma 4.7]{KS25}.
\begin{prop} \label{prop:weights_at_fixed_point}
     Torus $\torus$ acts on the vector $[x^iy^j] \in \BC[x,y]/J_{\lb^k}$ for a fixed point $\lbb$ by  $q^{\ct(i,j)}Q_k$.  
\end{prop}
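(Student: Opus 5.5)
The claim is a weight computation at the torus-fixed point of \cref{prop:gieseker_fixed_point_structure}. The plan is to fix $\lbb$, lift the action of $\torus$ on $\Gis$ to a homomorphism $\rho\colon \torus \to G_{\vv}=\GL(V)$ that stabilizes the quadruple $(X^{\lbb},Y^{\lbb},\gamma^{\lbb},\delta^{\lbb})$, and then read off the weights of $\rho$ on the basis $[x^iy^j]$. Because $G_{\vv}$ acts freely on stable points, this compensating homomorphism is unique. The $\torus$-action on $V$ at the fixed point is defined through $\rho$.

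First write the action explicitly. An element of $\mathbb{T}$ is $g=\operatorname{diag}(s,s^{-1})$, with $q$ the character $s\mapsto s$ or its inverse depending on the convention. It acts by $(X,Y,\gamma,\delta)\mapsto(sX,s^{-1}Y,\gamma t^{-1},t\delta)$, since $\det g=1$. Here $t=\operatorname{diag}(t_1,\dots,t_r)\in\BT_r$, and $Q_k$ is the character $t\mapsto t_k$ (again up to the sign convention, which must match the one behind the ring $\CR$).

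Next find $h\in\GL(V)$ with $h X^{\lbb}h^{-1}=sX^{\lbb}$, $hY^{\lbb}h^{-1}=s^{-1}Y^{\lbb}$ and $h\gamma^{\lbb}t^{-1}=\gamma^{\lbb}$; the condition on $\delta$ is automatic because $\delta^{\lbb}=0$. The natural guess is that $h$ acts on $[x^iy^j]\in\BC[x,y]/J_{\la^k}$ by the scalar $t_k\,s^{\,j-i}$.

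Then check the three conditions on the monomial basis. Multiplication by $x$ sends $[x^iy^j]$ to $[x^{i+1}y^j]$ or to $0$, since $J_{\la^k}$ is monomial. Conjugating by $h$ therefore rescales $L^k_x$ by $s^{-1}$ and $L^k_y$ by $s$. So the correct sign is $s^{i-j}$ rather than $s^{j-i}$, or equivalently the convention $q\leftrightarrow q^{-1}$ is swapped. The exponent is fixed by this check, and it is matched with $\ct$ of \cref{def:content} using the box-indexing convention (box $(i,j)$ holds $x^{i-1}y^{j-1}$), so that the weight is $q^{\ct}$. For the framing, $\gamma^{\lbb}(w_k)=[1]$ in the $k$-th summand, and $h\gamma^{\lbb}t^{-1}(w_k)=t_k^{-1}t_k[1]=[1]$, as required. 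This computation also gives the factor $Q_k$ uniformly across the whole $k$-th summand, in line with the $r=1$ case recalled before \cref{prop:gieseker_fixed_point_structure}.

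The only real obstacle is bookkeeping: the signs of $q$ and $Q_k$ (whether $\gamma$ carries $t^{-1}$ and how the action on $V$ is dualized) and the shift between box coordinates and monomial exponents. I would settle these by fixing the conventions so that the $r=1$ statement recalled before \cref{prop:gieseker_fixed_point_structure} holds, and then the general case follows summand by summand.
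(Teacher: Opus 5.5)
The paper gives no argument of its own here; it only cites \cite[Lemma 4.7]{KS25}. Your plan is the standard proof and the right approach: find the unique $h\in\operatorname{GL}(V)$ compensating $(g,t)$ at the fixed quadruple, then read off its eigenvalues on the monomial basis. However, the sign step as you wrote it is wrong, and your ``correction'' introduces the error rather than removing it. Your three conditions on $h$ do not express a single stabilizer condition. The equations $hX^{\lambda^\bullet}h^{-1}=sX^{\lambda^\bullet}$ and $hY^{\lambda^\bullet}h^{-1}=s^{-1}Y^{\lambda^\bullet}$ say $(g,t)\cdot p=h\cdot p$, while $h\gamma^{\lambda^\bullet}t^{-1}=\gamma^{\lambda^\bullet}$ says $h\cdot((g,t)\cdot p)=p$. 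The solution of this mixed system, $t_k s^{i-j}$ on $[x^iy^j]$, is the compensating element for $(g^{-1},t)$. It therefore has the wrong relative sign between the $\mathbb{T}$- and $\mathbb{T}_r$-exponents. Relabelling $q=s^{-1}$ while keeping $Q_k=t_k$ does not repair this. Under that labelling the actual weights are $q^{-\ct(i,j)}Q_k$, so you would be asserting the proposition for a choice of generators of $\mathcal{R}$ for which it is false.

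The fix is to write the condition consistently, for instance as $h\cdot((g,t)\cdot p)=p$, that is $hX^{\lambda^\bullet}h^{-1}=s^{-1}X^{\lambda^\bullet}$, $hY^{\lambda^\bullet}h^{-1}=sY^{\lambda^\bullet}$, $h\gamma^{\lambda^\bullet}t^{-1}=\gamma^{\lambda^\bullet}$. Then your first guess $h[x^iy^j]=t_k s^{j-i}[x^iy^j]$ is exactly right. Every basis monomial is reached from $[1]$ through nonzero monomials because $J_{\lambda^k}$ is monomial, and uniqueness comes from freeness. You should also state explicitly that this $h$, and not $h^{-1}$, is the operator by which $(g,t)$ acts on the fibre of $\mathcal{V}=\mu^{-1}(0)^{\mathrm{st}}\times_{\operatorname{GL}(V)}V$ at $p$. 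Indeed, $(g,t)\cdot p=h^{-1}\cdot p$ gives $(g,t)\cdot[p,v]=[h^{-1}p,v]=[p,hv]$; ``defined through $\rho$'' leaves this open. With the natural labels $q=s$, $Q_k=t_k$ you get $q^{j-i}Q_k=q^{\ct(i,j)}Q_k$ with no swap, consistent with the $r=1$ statement. Finally, the box-versus-exponent shift you list as a bookkeeping issue is harmless, since $\ct(i,j)=\ct(i+1,j+1)$.
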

    

  \section{Cyclotomic affine Hecke algebra and K-theory of Gieseker space}\label{sec:K-theory_of_Gieseker_space}
  \subsection{Cyclotomic affine Hecke algebra}
Let $R$ be a commutative ring with unit $1$. Take $q$ an invertible element of $R$ and $Q_1,\dots, Q_r$ in $R$ and denote by $\boldsymbol{q}=(q,Q_1,\dots,Q_r)$. Let us start by defining $\Heck_{n,r}(R,\boldsymbol{q})$, the cyclotomic Hecke algebra of the reflection group 
\begin{equation*}
\mathrm{G}(r,1,n):=(\Z/r\Z)^n \rtimes \mathfrak{S}_n
\end{equation*}
over $R$ with Hecke parameter $q$ and cyclotomic parameters $Q_i$. This is the Ariki-Koike algebra over $R$. We recall \cite[Definition 3.1]{ArikiKoike}.

\begin{deff}\label{deff:cyclotomic Hecke}
Let $\Heck_{n,r}(R,\boldsymbol{q})$ be the associative $R$-algebra with generators $\Lj_1, \ldots, \Lj_n, T_1,\dots,T_{n\smin 1}$ and relations:
\begin{equation*}
\left\{
\begin{aligned}
T_iT_j&=T_jT_i &\quad|i\smin j|>1, \forall (i,j)\in \llbracket 1, n \smin 1 \rrbracket^2,&\\
\Lj_i\Lj_j&=\Lj_j\Lj_i &\quad \forall (i,j)\in \llbracket 1,n \rrbracket^2,&\\
T_i\Lj_j&=\Lj_jT_i &\quad j\neq i, j\neq i+1, \forall (i,j)\in \llbracket 1, n \smin 1 \rrbracket \times \llbracket 1, n \rrbracket,&\\
T_iT_{i+1}T_i&=T_{i+1}T_iT_{i+1}  &\forall i \in \llbracket 1, n\smin 2 \rrbracket,& \label{1}\\
(T_i+1)(T_i-q)&=0 &\forall i\in \llbracket 1, n \smin 1\rrbracket,&\\
q\Lj_{i+1}&= T_i\Lj_iT_i  &\forall i\in \llbracket 1, n \smin 1 \rrbracket,&\\
\prod_{i=1}^r{(\Lj_1-Q_i)}&=0. &&
\end{aligned}\right.
\end{equation*}

\end{deff}

\noindent Since the braid relations hold for $T_1,\dots,T_{n\smin 1}$ in $\Heck_{n,r}(R,\bq)$,  $T_{\omega}:=T_{i_1}\dots T_{i_k}$ does not depend on the choice of the reduced expression of $\omega=s_{i_1}\dots s_{i_k} \in \mathfrak{S}_n$, where $(s_i)_{i \in \llbracket 1, n\smin 1 \rrbracket}$ is the standard set of simple reflections of the symmetric group on $n$ letters. Thanks to \cite[Corollary 3.13]{ArikiKoike}, we have that $\Heck_{n,r}(R,\bq)$ is a free $R$-module with basis
\begin{equation*}
\left \{\Lj_1^{a_1}\dots \Lj_n^{a_n}T_{\omega} | (a_1,\dots,a_n) \in \llbracket 0, r \smin 1 \rrbracket^n, \omega \in \mathfrak{S}_n \right\}.
\end{equation*}

\noindent We are interested in $Z(\Heck_{n,r}(R,\bq))$, the center of the cyclotomic Hecke algebra. Let $Z(\Heck_{n,r}(R,\bq))^{\mathrm{JM}}$ be the ring of symmetric polynomials in the Jucys-Murphy elements $\Lj_1,\dots,\Lj_n$. We refer to this subalgebra as the Jucys-Murphy center. We then have the following lemma.

\begin{lemme}
\label{lemma:incl-center}
The ring $Z(\Heck_{n,r}(R,\bq))^{\mathrm{JM}}$ is central in $\Heck_{n,r}(R,\bq)$. 
\end{lemme}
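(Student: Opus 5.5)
Since the algebra is generated by $\Lj_1,\dots,\Lj_n$ and $T_1,\dots,T_{n\smin 1}$, and the $\Lj_j$ commute pairwise, any polynomial in the $\Lj_j$ commutes with all $\Lj_k$. It therefore suffices to show that a symmetric polynomial $f(\Lj_1,\dots,\Lj_n)$ commutes with each $T_i$. Every such $f$ is a polynomial in elementary symmetric functions in two variable groups: those in $\Lj_i,\Lj_{i+1}$, and those in the remaining $\Lj_j$. By the relation $T_i\Lj_j=\Lj_jT_i$ for $j\neq i,i+1$, it is then enough to show that $T_i$ commutes with $\Lj_i+\Lj_{i+1}$ and with $\Lj_i\Lj_{i+1}$.

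For the product, I use $\Lj_{i+1}=q^{-1}T_i\Lj_iT_i$ to get
\[
T_i\Lj_i\Lj_{i+1}=q^{-1}T_i\Lj_iT_i\Lj_iT_i=\Lj_{i+1}\Lj_iT_i=\Lj_i\Lj_{i+1}T_i .
\]
For the sum, I first derive the commutation rule between $T_i$ and $\Lj_i$. The quadratic relation gives $T_i^{-1}=q^{-1}(T_i-(q-1))$. Then $q\Lj_{i+1}=T_i\Lj_iT_i$ yields $T_i\Lj_i=q\Lj_{i+1}T_i^{-1}=\Lj_{i+1}T_i-(q-1)\Lj_{i+1}$. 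Similarly, $\Lj_iT_i=qT_i^{-1}\Lj_{i+1}=T_i\Lj_{i+1}-(q-1)\Lj_{i+1}$, so $T_i\Lj_{i+1}=\Lj_iT_i+(q-1)\Lj_{i+1}$. Adding these two identities gives $T_i(\Lj_i+\Lj_{i+1})=(\Lj_i+\Lj_{i+1})T_i$.

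The only step needing care is this bookkeeping with $T_i^{-1}$. The reduction to elementary symmetric functions is standard: a polynomial symmetric in all variables is in particular symmetric in $\Lj_i,\Lj_{i+1}$, so it lies in $R[\Lj_j : j\neq i,i+1][\Lj_i+\Lj_{i+1},\Lj_i\Lj_{i+1}]$. The cyclotomic relation is never used, so the argument in fact holds already in the affine Hecke algebra.
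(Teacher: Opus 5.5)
Your proof is correct, but it takes a different route from the paper. The paper gives no computation: it splits into the cases $q=1$ and $q\neq 1$ and cites \cite[Theorem 1]{Br08} and \cite[Proposition 2.1]{Mat04}, respectively.

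You instead verify centrality directly from the defining relations. You reduce, via $A[x_i,x_{i+1}]^{\mathfrak{S}_2}=A[x_i+x_{i+1},\,x_ix_{i+1}]$ with $A=R[x_j : j\neq i,i+1]$ (valid over any commutative ring), to the two commutators $[T_i,\Lj_i\Lj_{i+1}]$ and $[T_i,\Lj_i+\Lj_{i+1}]$. You compute both correctly using only $T_i^{-1}=q^{-1}(T_i-(q-1))$ and $q\Lj_{i+1}=T_i\Lj_iT_i$.

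What your route buys:
\begin{itemize}
\item It is self-contained and uniform in $q$: only invertibility of $q$ is used, so no case split is needed.
\item It never uses the cyclotomic relation, so it holds in the affine Hecke algebra.
\item At $q=1$ it applies directly to what this presentation actually specializes to. The paper itself notes later that this is the smash product $\mathbb{C}\mathfrak{S}_n \# P_{\boldsymbol{Q}}$, in which $T_i$ simply swaps $\Lj_i$ and $\Lj_{i+1}$. That is not the degenerate cyclotomic Hecke algebra treated in \cite{Br08}, so your argument fits the $q=1$ case more directly than the citation does.
\end{itemize}
The paper's route is just shorter, at the cost of outsourcing the check.

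One small addendum to your argument: the paper later also counts $\Lj_1^{-1}\cdots\Lj_n^{-1}$ as an element of the Jucys--Murphy center. This element is central as well, being the inverse of the central unit $\Lj_1\cdots\Lj_n$. It is a unit over $\mathcal{R}$ because the $Q_i$ are invertible there.
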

\begin{proof}
When $q=1$, \cite[Theorem 1]{Br08} implies the result. For the non-degenerate case, the result follows from \cite[Proposition 2.1]{Mat04}.
\end{proof}

Recall that $\mathcal{R}=\C[q^{\pm 1}, Q_1^{\pm 1},\dots,Q_r^{\pm 1}]$. By definition, $K_{\GL_n(\C) \times \mathcal{T}}(\{ *\})$ is the representation ring of $\GL_n(\C) \times \mathcal{T}$ which is isomorphic to $\mathcal{R}[x_1^{\pm 1},\dots,x_n^{\pm 1}]^{\mathfrak{S}_n}$. 

\begin{prop} \label{prop:generators_in_k-theory}
The classes $[\C^n],[\Lambda^2  \C^n],...,[\Lambda^n\C^n], [(\Lambda^n\C^n)^{*}] $ generate   $K_{\operatorname{GL}_n(\mathbb{C}) \times \mathcal{T}}(\{*\}) $ as $\CR$-algebra. Under the identification with $\mathcal{R}[x_1^{\pm 1},\dots,x_n^{\pm 1}]^{\mathfrak{S}_n}$, $[\Lambda^i\C^n]$ is equal to $e_i(x_1,\dots,x_n)$, the elementary symmetric polynomial and $[(\Lambda^n\C^n)^{*}]$ to $(x_1 \cdot \ldots \cdot x_n)^{-1}$ .
\end{prop}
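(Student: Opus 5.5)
The plan is to reduce to the classical description of the representation ring of a torus together with the fundamental theorem on symmetric polynomials. The ring $K_{\GL_n(\C)\times\mathcal{T}}(\{*\})$ is the representation ring of $\GL_n(\C)\times\mathcal{T}$. Since $\mathcal{T}$ is a torus, this is $R(\GL_n)\otimes_{\C}\CR$ (after tensoring with $\C$). Restriction to the maximal torus $(\C^\times)^n\subset \GL_n(\C)$ is injective on characters, and its image is the Weyl group invariants. This gives the identification $R(\GL_n)\otimes\CR\simeq \CR[x_1^{\pm1},\dots,x_n^{\pm1}]^{\mathfrak{S}_n}$ already used in the paper.

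First I identify the classes. The weights of $\C^n$ restricted to the diagonal torus are $x_1,\dots,x_n$. Hence the weights of $\Lambda^i\C^n$ are the monomials $x_{j_1}\cdots x_{j_i}$ with $j_1<\dots<j_i$, so its character is $e_i(x_1,\dots,x_n)$. Similarly $\Lambda^n\C^n=\det$ has character $x_1\cdots x_n=e_n$, and its dual has character $(x_1\cdots x_n)^{-1}=e_n^{-1}$. These classes carry the trivial $\mathcal{T}$-action, so they involve no $q$ or $Q_j$.

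Next I prove generation. Take $f\in \CR[x^{\pm1}]^{\mathfrak{S}_n}$. Multiplying by a sufficiently large power $e_n^N$ gives a polynomial $e_n^N f\in \CR[x_1,\dots,x_n]^{\mathfrak{S}_n}$, which is still symmetric because $e_n$ is. By the fundamental theorem of symmetric polynomials, which holds over any commutative ring and in particular over $\CR$, this is a polynomial in $e_1,\dots,e_n$ with coefficients in $\CR$. Therefore $f=e_n^{-N}\cdot P(e_1,\dots,e_n)$ lies in the $\CR$-subalgebra generated by $e_1,\dots,e_n,e_n^{-1}$, which are exactly the stated classes.

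The only point needing care is the identification of the representation ring with $\CR[x^{\pm1}]^{\mathfrak{S}_n}$, that is, that characters determine classes and that every symmetric Laurent polynomial is a virtual character. The paper takes this identification as given. If one wants to avoid it, the surjectivity argument above already shows that every symmetric Laurent polynomial is a combination of characters of $\Lambda^i\C^n$ and $\det^{-1}$. So no real obstacle remains; the main step is the clearing of denominators by $e_n^N$.
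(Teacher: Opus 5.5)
Your proposal is correct and is essentially the paper's argument. The paper simply cites Fulton--Harris \S15.5 for this classical fact, and you have written out its standard proof: the character of $\Lambda^i\mathbb{C}^n$ is $e_i$, you clear denominators by a power of $e_n$, and you apply the fundamental theorem of symmetric polynomials over $\mathcal{R}$.
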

\begin{proof}
 This follows directly from \cite[\S15.5]{FultonHarris}.
\end{proof}

Consider the $\mathcal{R}$-algebra homomorphism: $\mathcal{R}[x_1^{\pm 1},...,x_n^{\pm 1}]^{\mathfrak{S}_n} \to \ZJM$.
This gives the following homomorphism of $\mathcal{R}$-algebras
$$\pi:K_{\GL_n(\C) \times \mathcal{T}}(\{*\}) \twoheadrightarrow \ZJM .$$

 Denote by $\Heck_{n,r}$ the algebra $\Heck_{n,r}(\mathcal{R},\bq)$. We also know that $\Heck_{n,r}$ is a cellular algebra (\cite[Theorem 5.5]{Graham-Lehrer-Cellular}, \cite[Theorem 6.3]{AMR06}) with cell datum indexed by $\mathcal{P}^r_n$. For $\lbb \in \mathcal{P}^r_n$, denote by $S^{\lbb}$ the cell $\Heck_{n,r}$-module indexed by $\lbb$ with Murphy basis  $\{c_t \mid t \in \mathrm{SYT}(\lbb)\}$ \cite[Definition 3.14]{DJM98} where $\mathrm{SYT}(\lbb)$ is the set of standard Young tableau with shape $\lbb$. Now by construction of the Jucys-Murphy's elements $\Lj_1,\dots, \Lj_n$ act triangularly on the basis $\{c_t \mid t \in \mathrm{SYT}(\lbb)\}$:
\begin{equation*}
\Lj_i.c_t=\alpha_t(i)c_t + \sum_{v \triangleright t}{r_{i,t}^vc_v},
\end{equation*}
for some $\alpha_t(i),r_{i,t}^v \in \mathcal{R}$.
\begin{deff} For $t \in\mathrm{SYT}(\lbb)$, let
\begin{equation*}
    \alpha_t:\begin{array}{ccc}
    \ZJM &\to & \mathcal{R} \\
    f & \mapsto & f(\alpha_t(1),\dots,\alpha_t(n))
    \end{array}.
\end{equation*}
\end{deff}

\begin{prop} 
For each $\lbb \in \mathcal{P}^r_n$ and $s,t \in \mathrm{SYT}(\lbb)$, the following equality holds:
\begin{equation*}
\alpha_s = \alpha_t.
\end{equation*}
\end{prop}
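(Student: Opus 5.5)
We prove that for $f\in\ZJM$ the value $f(\alpha_t(1),\dots,\alpha_t(n))$ depends only on the shape $\lbb$. The underlying fact is that the diagonal entries $\alpha_t(i)$ are the contents of $t$: if $i$ sits in node $(a,b)$ of the component $\la^k$, then
\[
\alpha_t(i)=Q_k\,q^{\ct(a,b)}
\]
(for $q\neq 1$; the Jucys--Murphy elements $\Lj_i$ are normalised multiplicatively by $q\Lj_{i+1}=T_i\Lj_iT_i$). Granting this, the multiset $\{\alpha_t(1),\dots,\alpha_t(n)\}$ equals
\[
\{\,Q_k q^{\ct(a,b)} : (a,b)\in\BY(\la^k),\ 1\le k\le r\,\},
\]
which does not depend on $t$. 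Since $f$ is a symmetric (Laurent) polynomial in its $n$ arguments, permuting them does not change its value, so $\alpha_s(f)=\alpha_t(f)$. This is exactly the same multiset of torus weights that appears at the fixed point $\lbb$ in \cref{prop:weights_at_fixed_point}.

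The only real content is the formula for $\alpha_t(i)$, i.e.\ the triangularity of $\Lj_i$ on the Murphy basis with explicit diagonal. I would cite \cite[Proposition 3.7]{JM00} (James--Mathas) or the corresponding statement in \cite{DJM98}, \cite{AMR06}. Both the Murphy basis and the Jucys--Murphy elements there are defined with the same conventions as \cref{deff:cyclotomic Hecke}, and the content formula is proved by induction on $i$. The base case uses $\prod_k(\Lj_1-Q_k)=0$ and the fact that $1$ lies in the first node of some component $k$, so $\alpha_t(1)=Q_k$. The inductive step uses $q\Lj_{i+1}=T_i\Lj_iT_i$ together with the action of $T_i$ on $c_t$, which is triangular up to $c_{s_it}$. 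If I had to reprove it rather than cite it, I would carry out this induction.

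The main obstacle is matching conventions: the identity $\alpha_t(i)=Q_kq^{\ct}$ depends on the choice of $\triangleright$ versus $\triangleleft$, on whether the row or the column index enters the content with a plus sign, and on the labeling of the components by $Q_k$. None of this affects the argument, though. Whatever the convention, the diagonal entry is a function of the node occupied by $i$ alone, and the set of nodes of $\lbb$ does not depend on $t$.

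An alternative, convention-free route works over the fraction field of $\mathcal{R}$, where $\Heck_{n,r}$ is split semisimple (Ariki). There the cell module $S^{\lbb}$ is simple, so the central element $f$ acts on it by a single scalar. Because $\Lj_i$ is triangular on the Murphy basis with diagonal $\alpha_t(i)$, the matrix of $f$ on that basis is triangular with diagonal entry $\alpha_t(f)$ in position $t$. The matrix of a scalar has all diagonal entries equal to that scalar, so all $\alpha_t(f)$ coincide. Since $\mathcal{R}$ embeds in its fraction field, the equality already holds in $\mathcal{R}$.
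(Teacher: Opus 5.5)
Your proof is correct, and your main route differs from the paper's.

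The paper's proof is one line: it deduces the statement from \cite[Proposition 3.2]{Li12} together with \cref{lemma:incl-center}. The logic is that a symmetric polynomial $f$ in the $\Lj_i$ is central. A central element acts on the cell module $S^{\lbb}$ by a scalar. On the Murphy basis, the triangular matrix of $f$ has the $\alpha_t(f)$ on its diagonal, so these must all coincide. That is essentially your alternative route. The only difference is that you obtain the scalar action from simplicity of $S^{\lbb}$ over $\operatorname{Frac}\mathcal{R}$ (the same input the paper uses in \cref{lemma: End ring small}), rather than from the cellular structure.

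Your main route instead uses the explicit diagonal entries $\alpha_t(i)=q^{b-a}Q_c$, and after that only the symmetry of $f$. The paper records this formula separately as \cref{lem:cyclic-struct-alg}, citing \cite[Proposition 3.7]{GM00}. This route has three advantages. It makes the proposition a one-line corollary of that lemma, and there is no circularity, since the lemma is proved independently by citation. It dispenses with the centrality of $\ZJM$ entirely. It also gives the common value $\alpha_{\lbb}(f)$ directly, as $f$ evaluated at the multiset of $q^{b-a}Q_c$ over the nodes $(a,b,c)$ of $\lbb$, which is exactly what the proof of \cref{thm:main} needs.

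The paper's route, in exchange, does not depend on any conventions for residues. It also applies to any cellular algebra with a Jucys--Murphy family whose symmetric polynomials are central, without needing to know the eigenvalues.
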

\begin{proof}
This is a direct application of \cite[Proposition 3.2]{Li12} and \cref{lemma:incl-center}.
\end{proof}

As $\alpha_t$ does not depend on $t$, denote $\alpha_{\lbb}:=\alpha_t$ for any $t$.

\begin{lemme}
    \label{lemma: End ring small}
    The endomorphism ring $\End_{\Heck_{n,r}}(S^{\lbb})$ is isomorphic to $\mathcal{R}$, for each $\lbb \in \mathcal{P}^r_n$.
\end{lemme}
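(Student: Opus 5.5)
We will prove that $\End_{\Heck_{n,r}}(S^{\lbb})=\mathcal{R}$ by reducing to the generic situation, where the algebra becomes split semisimple. Since $\mathcal{R}=\C[q^{\pm1},Q_1^{\pm1},\dots,Q_r^{\pm1}]$ is an integral domain, let $K$ denote its fraction field.

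\emph{Step 1 (torsion-freeness).} $S^{\lbb}$ is a free $\mathcal{R}$-module with Murphy basis $\{c_t\mid t\in\mathrm{SYT}(\lbb)\}$. Hence
\[
\End_{\Heck_{n,r}}(S^{\lbb})\subset \End_{\mathcal{R}}(S^{\lbb})
\]
is torsion-free. Any endomorphism $\phi$ extends to an endomorphism $\phi_K$ of $S^{\lbb}_K:=S^{\lbb}\otimes_{\mathcal{R}}K$ over $\Heck_{n,r}(K,\bq)$, and the map $\phi\mapsto\phi_K$ is injective.

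\emph{Step 2 (generic semisimplicity).} Over $K$ the parameters are generic. Concretely, $q$ is not a root of unity, and $Q_iQ_j^{-1}\neq q^{d}$ for all $i\neq j$ and all $|d|<n$. By the Ariki semisimplicity criterion, $\Heck_{n,r}(K,\bq)$ is therefore split semisimple, and the cell modules $S^{\lbb}_K$ are absolutely irreducible and pairwise non-isomorphic. We can also see this directly from the data already introduced. The Jucys-Murphy elements act triangularly on the Murphy basis with diagonal entries $\alpha_t(i)=Q_kq^{\ct}$, the residues determined by $t$. For distinct standard tableaux these residue sequences differ over $K$, so the $\Lj_i$ are simultaneously diagonalizable on $S^{\lbb}_K$ with one-dimensional joint eigenspaces. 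An endomorphism commutes with the $\Lj_i$, so it preserves each eigenline. The $T_i$ connect the eigenvectors, since the seminormal form has nonzero off-diagonal coefficients, and therefore the endomorphism acts by a single scalar. By Schur's lemma, $\End(S^{\lbb}_K)=K$.

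\emph{Step 3 (descent to $\mathcal{R}$).} By Steps 1 and 2, each $\phi$ acts on $S^{\lbb}$ as a scalar $c\in K$. Let $t^{\lbb}$ be the maximal tableau; its Murphy basis vector $c_{t^{\lbb}}$ generates $S^{\lbb}$ as a module, being the image of the cellular element $m_{\lbb}$. Then $\phi(c_{t^{\lbb}})=c\,c_{t^{\lbb}}$ must lie in $S^{\lbb}=\bigoplus_t\mathcal{R}c_t$, which forces $c\in\mathcal{R}$. Conversely, every $c\in\mathcal{R}$ gives an endomorphism, so $\End_{\Heck_{n,r}}(S^{\lbb})\cong\mathcal{R}$.

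The main obstacle is Step 2, namely justifying generic irreducibility carefully. We will cite Ariki's semisimplicity criterion together with the cellular theory of Graham and Lehrer: over a field where the algebra is semisimple, the cell modules are exactly the simple modules. Alternatively, we can invoke the standard fact for cellular algebras that $\End(S^{\lbb})$ is generated by scalars whenever the bilinear form on $S^{\lbb}$ is nonzero, which holds here because $\langle c_{t^{\lbb}},c_{t^{\lbb}}\rangle=1$ in the Murphy construction.
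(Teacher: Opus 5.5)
Your argument is correct and is essentially the paper's. The paper also obtains scalar endomorphisms over $\operatorname{Frac}\mathcal{R}$ from generic irreducibility of the cell modules (citing Ariki--Koike, Theorem 3.7), then descends to $\mathcal{R}$ using freeness of $S^{\lbb}$, a step your Step 3 makes explicit.

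One correction to your closing aside: the form on the top Murphy vector is not $1$ in general. For $r=1$, $n=2$, the rank-one cell module generated by $m=1+T_1$ has $m^2=(1+q)m$, so the value is $1+q$. It is, however, always a nonzero element of $\mathcal{R}$, and together with your Step 3 that is all the cellular argument needs.
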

\begin{proof}
    We know that the endomorphism ring of the Specht modules is just scalar multiplication after tensoring with the fraction of field of $\CR$, since these are then irreducible \cite[Theorem 3.7]{ArikiKoike}. Moreover, these modules are free over $\mathcal{R}$, hence the desired result.
\end{proof}

For $\lbb \in \mathcal{P}^r_n$, the action of $z \in Z(\Heck_{n,r})$ on $S^{\lbb}$ defines an element of $\End_{\Heck_{n,r}}(S^{\lbb})$. Using \cref{lemma: End ring small}, we have a unique scalar $\alpha_{\lbb}(z)\in \mathcal{R}$. We can now define the following homomorphism:
\begin{equation*}
    \sigma:\begin{array}{ccc}
    Z(\Heck_{n,r}) &\to & \bigoplus_{\lbb \in \mathcal{P}_n^r}{\mathcal{R}} \\
    z & \mapsto & (\alpha_{\lbb}(z))_{\lbb}
    \end{array}.
\end{equation*}

We will abuse notation and denote also by $\sigma$ the restriction to $\ZJM$.
\begin{lemme}
\label{lemma: sigma-inj}
    The homomorphism $\sigma:\ZJM \to  \bigoplus_{\lbb \in \mathcal{P}_n^r}{\mathcal{R}}$ is injective.
\end{lemme}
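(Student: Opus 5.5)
Since $\Heck_{n,r}$ is a free $\mathcal{R}$-module with the Ariki--Koike basis and $\mathcal{R}$ is a domain, the plan is to pass to the fraction field $\mathbb{K}=\operatorname{Frac}(\mathcal{R})$. There the algebra $\Heck_{n,r}\otimes_{\mathcal{R}}\mathbb{K}$ is split semisimple \cite[Theorem 3.7]{ArikiKoike}, and the Specht modules $S^{\lbb}\otimes\mathbb{K}$, $\lbb\in\mathcal{P}^r_n$, form a complete set of pairwise non-isomorphic simple modules. Because $\Heck_{n,r}$ is free over $\mathcal{R}$, the natural map $\Heck_{n,r}\to\Heck_{n,r}\otimes\mathbb{K}$ is injective. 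In particular $\ZJM\subset Z(\Heck_{n,r})$ (by \cref{lemma:incl-center}) embeds into $Z(\Heck_{n,r}\otimes\mathbb{K})$.

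For a semisimple split algebra $A=\prod_{\lbb}\operatorname{End}_{\mathbb{K}}(S^{\lbb}_{\mathbb{K}})$, the center is $\prod_{\lbb}\mathbb{K}$. A central element is therefore zero as soon as it acts by zero on every simple module. Suppose $z\in\ZJM$ satisfies $\sigma(z)=0$, i.e.\ $\alpha_{\lbb}(z)=0$ for all $\lbb$. By \cref{lemma: End ring small}, $z$ acts on $S^{\lbb}$ as the scalar $\alpha_{\lbb}(z)$, hence by zero on $S^{\lbb}\otimes\mathbb{K}$ for every $\lbb$. Wedderburn then gives $z\otimes 1=0$ in $\Heck_{n,r}\otimes\mathbb{K}$, and torsion-freeness gives $z=0$.

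We should also check consistency: the scalar by which $z$ acts on $S^{\lbb}$ is indeed the value $\alpha_{\lbb}(z)$ defined earlier through the triangular action of the Jucys--Murphy elements. On the Murphy basis, a symmetric polynomial $f(\Lj_1,\dots,\Lj_n)$ acts triangularly with diagonal entries $f(\alpha_t(1),\dots,\alpha_t(n))$. Since it also acts by a scalar, that scalar equals the diagonal entry. So the two descriptions of $\sigma$ agree, although only the scalar description is needed for injectivity.

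The one genuinely nontrivial input is the semisimplicity of $\Heck_{n,r}\otimes\mathbb{K}$ for generic parameters. Concretely, the Ariki--Koike criterion requires $\prod_{i}(1+q+\dots+q^{i-1})\prod_{i<j}\prod_{|d|<n}(q^dQ_i-Q_j)\neq 0$, which holds in $\mathcal{R}$ since $q,Q_1,\dots,Q_r$ are independent variables. If one prefers to avoid quoting semisimplicity, there is a more hands-on alternative. The contents $\alpha_t(i)=q^{\ct}Q_k$ separate standard tableaux over $\mathbb{K}$, so the Jucys--Murphy elements act diagonalizably with simple joint spectrum on $\bigoplus_{\lbb}S^{\lbb}_{\mathbb{K}}$. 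A dimension count $\sum_{\lbb}(\#\mathrm{SYT}(\lbb))^2=r^nn!$ then identifies the regular representation with $\bigoplus_{\lbb}(S^{\lbb}_{\mathbb{K}})^{\oplus\dim}$, and $z$ acting by zero on the regular representation forces $z=z\cdot 1=0$. I expect verifying this semisimplicity or faithfulness step to be the only real obstacle.
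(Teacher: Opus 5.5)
Your proof is correct, but it follows a different route from the paper. The paper argues by a rank count. It quotes that $\ZJM\otimes\operatorname{Frac}\mathcal{R}$ has dimension $|\mathcal{P}^r_n|$ (Ariki--Koike, Hu--Shi). It shows that $\sigma$ becomes surjective after localization, either because the contents $q^{b-a}Q_c$ separate multipartitions generically or via the $K$-theoretic localization theorem. It deduces that $\sigma$ is an isomorphism over $\operatorname{Frac}\mathcal{R}$, and then kills the torsion kernel using freeness of $\Heck_{n,r}$.

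You bypass both the generic dimension of the Jucys--Murphy center and the generic surjectivity. Generic split semisimplicity, with the $S^{\lbb}\otimes\mathbb{K}$ as the simples, shows via Wedderburn that anything acting by zero on all $S^{\lbb}\otimes\mathbb{K}$ is zero in $\Heck_{n,r}\otimes\mathbb{K}$. The same torsion-freeness then finishes.

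Each route gives something extra. Yours gives injectivity of $\sigma$ on the whole center $Z(\Heck_{n,r})$, not just on $\ZJM$. The paper's gives that $\sigma$ is a generic isomorphism onto $\bigoplus_{\lbb}\mathcal{R}$. That extra surjectivity is not needed for \cref{thm:main}: since $\kappa,\pi$ are surjective and $\iota^*,\sigma$ injective, the identity $\iota^*\circ\kappa=\sigma\circ\pi$ already forces $\ker\kappa=\ker\pi$. Generic surjectivity of $\sigma$ then follows from the localization theorem.

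One caveat about your optional ``hands-on alternative''. Simple joint spectrum shows the $S^{\lbb}\otimes\mathbb{K}$ are pairwise non-isomorphic, but not that they are simple. The count $\sum_{\lbb}(\#\mathrm{SYT}(\lbb))^2=r^nn!$ alone does not split the cell filtration of the regular module. You would still need simplicity, for instance from seminormal forms. With it, density and the dimension count give $\Heck_{n,r}\otimes\mathbb{K}\cong\prod_{\lbb}\operatorname{End}_{\mathbb{K}}(S^{\lbb}\otimes\mathbb{K})$. Your main argument does not depend on this.
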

\begin{proof}
    The algebra $\Heck_{n,r}$ is a free $\mathcal{R}$-algebra. In particular it is torsion free. So $\ZJM$ is also an $\mathcal{R}$-torsion free module. 
    The fiber of $\ZJM$ at a Zariski generic point of $\operatorname{Spec}\mathcal{R}$ has rank $|\mathcal{P}^r_n|$ \cite[Theorem 3.20]{ArikiKoike} and \cite[Theorem 1.4]{HuShi}. 
    On the other hand, for a Zariski generic point, the specialization of $\sigma$ to this point is surjective 
    because the scalars that arise from action (image of $\sigma \circ \pi$) coincide with their $K$-theoretic counterpart (see the proof of \cref{thm:main}) and for the $K$-theory we have surjection by the localisation theorem; a purely algebraic proof follows from the fact that for a Zariski generic point the content function separates multipartitions, see \cite[Theorem 2.19]{Mat04} or \cite[Lemma 2.15]{HuShi}.
    
    This implies that $\sigma$ is generically an isomorphism. In particular, it becomes an isomorphism after tensoring by $\operatorname{Frac}\mathcal{R}$. It follows that the kernel of $\sigma$ must be an $\mathcal{R}$-torsion, hence, zero (since $\ZJM$ is $\mathcal{R}$-torsion free).  
\end{proof}

\begin{prop}
\label{prop:cyclic-struct-alg} 
        Under the map $\sigma \circ \pi$ the image of $[\Lambda^i\C^n]$ in $\bigoplus_{\mathcal{P}_n^r}{\mathcal{R}}$  is the family  of elementary symmetric polynomials
  \begin{equation*}  
         \left (\alpha_{\lbb}(e_i)\right)_{\lbb \in \mathcal{P}_n^r}. 
    \end{equation*}
\end{prop}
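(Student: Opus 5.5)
The statement is essentially an unwinding of the definitions of $\pi$ and $\sigma$, so the plan is to trace $[\Lambda^i\C^n]$ through each map in turn. First, by \cref{prop:generators_in_k-theory}, under the identification $K_{\GL_n(\C)\times\mathcal{T}}(\{*\})\simeq \mathcal{R}[x_1^{\pm1},\dots,x_n^{\pm1}]^{\mathfrak{S}_n}$ the class $[\Lambda^i\C^n]$ corresponds to $e_i(x_1,\dots,x_n)$. The homomorphism $\pi$ is defined by $x_j\mapsto \Lj_j$, so
\begin{equation*}
\pi([\Lambda^i\C^n])=e_i(\Lj_1,\dots,\Lj_n)\in\ZJM .
\end{equation*}
We should check that $\pi$ is well defined on the inverse $(x_1\cdots x_n)^{-1}$. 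This uses that each $\Lj_j$ is invertible in $\Heck_{n,r}$. Invertibility of $\Lj_1$ follows from the cyclotomic relation, since the $Q_i$ are units in $\mathcal{R}$ and so the constant term $\prod_i(-Q_i)$ is a unit. Invertibility then propagates to $\Lj_{j+1}=q^{-1}T_j\Lj_jT_j$, using that $T_j$ is invertible by the quadratic relation. This check is routine but worth recording.

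Next we compute $\sigma(e_i(\Lj_1,\dots,\Lj_n))$. Fix $\lbb$ and $t\in\mathrm{SYT}(\lbb)$. Order the Murphy basis of $S^{\lbb}$ compatibly with the dominance order $\triangleright$. In this basis each $\Lj_j$ acts by an upper triangular matrix with diagonal entries $\alpha_s(j)$, $s\in\mathrm{SYT}(\lbb)$. The $\Lj_j$ commute, so any polynomial $f(\Lj_1,\dots,\Lj_n)$ is again upper triangular, with diagonal entry $f(\alpha_s(1),\dots,\alpha_s(n))=\alpha_s(f)$ at $c_s$. Now take $f=e_i$. By \cref{lemma:incl-center}, $e_i(\Lj_1,\dots,\Lj_n)$ is central, so by \cref{lemma: End ring small} it acts on $S^{\lbb}$ as a scalar $\alpha_{\lbb}(e_i)$ in the sense of the definition of $\sigma$. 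A scalar matrix is determined by any of its diagonal entries, so this scalar equals $\alpha_t(e_i)$. This matches the earlier notation $\alpha_{\lbb}=\alpha_t$, which is independent of $t$ by the proposition citing \cite{Li12}. Collecting over all $\lbb$ gives $\sigma\circ\pi([\Lambda^i\C^n])=(\alpha_{\lbb}(e_i))_{\lbb}$, which is the claim.

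The only real subtlety is reconciling the two meanings of $\alpha_{\lbb}$. One is the scalar by which a central element acts on $S^{\lbb}$, which is well defined since $\End(S^{\lbb})=\mathcal{R}$. The other is the evaluation of a symmetric polynomial at the diagonal eigenvalues $\alpha_t(j)$. The triangularity argument above identifies the two. The identification needs the triangular action to be with respect to a genuine partial order on the Murphy basis, which is built into the stated formula $\Lj_i.c_t=\alpha_t(i)c_t+\sum_{v\triangleright t}r_{i,t}^vc_v$.
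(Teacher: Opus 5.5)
Your proof is correct and follows the paper's route. The paper's proof is just a one-line appeal to \cref{prop:generators_in_k-theory}, and your triangularity argument supplies the step it leaves implicit: the scalar by which the central element $e_i(\Lj_1,\dots,\Lj_n)$ acts on $S^{\lbb}$ equals the diagonal evaluation $\alpha_t(e_i)$, which also re-proves that this value is independent of $t$. One small aside is that for $\pi$ to land in $\ZJM$ you need $(\Lj_1\cdots\Lj_n)^{-1}$ to be a polynomial in $e_n(\Lj_1,\dots,\Lj_n)$, not merely invertible in $\Heck_{n,r}$; this follows from Cayley--Hamilton for left multiplication on the finite free $\mathcal{R}$-module $\Heck_{n,r}$, whose determinant is a unit.
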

\begin{proof}
This is a direct consequence of \cref{prop:generators_in_k-theory}.
\end{proof}

\begin{lemme}
\label{lem:cyclic-struct-alg}
The following equality holds:
\begin{equation*}
    \alpha_{t}(i) = q^{b-a}Q_c
\end{equation*}
where $x=(a,b,c)$ is the only node in the Young diagram of $\lbb$ such that $t(x)=i$. 
\end{lemme}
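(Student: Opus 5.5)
We need to show that the diagonal coefficient of $\Lj_i$ on the Murphy basis vector $c_t$ is $q^{b-a}Q_c$. The plan is to read this off from the standard triangularity result for Jucys-Murphy elements acting on the Murphy basis of the cell modules of the Ariki-Koike algebra, due to James-Mathas and Dipper-James-Mathas, after matching normalizations with \cref{deff:cyclotomic Hecke}.

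First, I will fix conventions. In \cite{DJM98} (see also \cite{AMR06}, Mathas' book), the Jucys-Murphy elements are defined by $L_1$ equal to the generator $T_0$ satisfying $\prod_c (T_0-Q_c)=0$, and $L_{i+1}=q^{-1}T_iL_iT_i$. This is exactly the relation $q\Lj_{i+1}=T_i\Lj_iT_i$ imposed in \cref{deff:cyclotomic Hecke}. Hence $\Lj_i$ coincides with the Jucys-Murphy element $L_i$ of \cite{DJM98}. The generator $T_0$ is identified with $\Lj_1$, and the quadratic relation $(T_i+1)(T_i-q)=0$ matches. I will also check the convention that a node of a multipartition is written $(a,b,c)$, with $a$ the row, $b$ the column and $c$ the component. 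Then the content $b-a$ agrees with \cref{def:content}.

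Second, I will invoke the triangularity theorem \cite[Proposition 3.7]{JM00} (equivalently, the corresponding statement in \cite{DJM98} or Mathas' book). It states
$$\Lj_i\, m_{\mathfrak s\mathfrak t} \equiv \mathrm{res}_{\mathfrak t}(i)\, m_{\mathfrak s\mathfrak t} + \sum_{\mathfrak v\triangleright\mathfrak t} r_{\mathfrak v} m_{\mathfrak s\mathfrak v} \pmod{\Heck^{\triangleright\lbb}},$$
with $\mathrm{res}_{\mathfrak t}(i)=q^{b-a}Q_c$ when $i$ sits in node $(a,b,c)$ of $\mathfrak t$. Passing to the cell module $S^{\lbb}$, whose basis is $c_t$ (the image of $m_{\mathfrak t^{\lbb}\mathfrak t}$), gives exactly the displayed triangular action. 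Since the diagonal coefficient with respect to a basis and a fixed partial order is uniquely determined, we get $\alpha_t(i)=q^{b-a}Q_c$.

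If one prefers a self-contained argument, it goes by induction on $i$. For $i=1$, the element $\Lj_1=T_0$ acts on $m_{\mathfrak t}$ through the factor $u^+_{\lbb}=\prod_{c}\prod_{k\le |\lambda^1|+\dots+|\lambda^{c-1}|}(\Lj_k-Q_c)$. Its diagonal entry is $Q_c$, where $c$ is the component containing $1$. For the step, one uses $\Lj_{i+1}=q^{-1}T_i\Lj_iT_i$ together with the action of $T_i$ on $m_{\mathfrak t}$. There are two cases. If $s_i\mathfrak t$ is standard, $T_i$ exchanges $m_{\mathfrak t}$ and $m_{s_i\mathfrak t}$ up to triangular terms. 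If $i,i+1$ lie in the same row or column, $T_i$ acts by $q$ or $-1$ modulo higher terms. The content then shifts appropriately.

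The main obstacle is this bookkeeping of conventions rather than any new mathematics. Dominance orders can be reversed between references ($\triangleright$ versus $\triangleleft$), the quadratic relation can be normalized as $(T_i-q)(T_i+1)$ or $(T_i-q)(T_i+q^{-1})$, and the content can be taken as $b-a$ or $a-b$. Each of these changes the formula for the residue, so I will carefully align them with \cref{deff:cyclotomic Hecke}. In particular, the factor $q$ in $q\Lj_{i+1}=T_i\Lj_iT_i$ must be checked to produce $q^{+1}$ per column step. This is consistent with the $\torus$-weights $q^{\ct}Q_k$ of \cref{prop:weights_at_fixed_point}, which is the compatibility the main theorem needs.
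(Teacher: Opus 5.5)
Your proposal is correct and takes essentially the paper's route: the paper's proof is just the citation \cite[Proposition 3.7]{GM00}, which is the same standard result you invoke, namely that the Jucys--Murphy elements act triangularly on the Murphy basis with diagonal entries $q^{b-a}Q_c$. Your convention checks and inductive sketch go beyond what the paper writes. One small index slip: that result is stated for right multiplication $m_{\mathfrak s\mathfrak t}L_i$. For the left action you should apply the anti-involution $*$, which fixes each $L_i$ and sends $m_{\mathfrak s\mathfrak t}$ to $m_{\mathfrak t\mathfrak s}$; after that it is the first index, not the second, that moves up in dominance.
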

\begin{proof}
This follows from \cite[Proposition 3.7]{GM00}.
\end{proof}

\subsection{Equivariant K-theory of Gieseker space}
Recall the following well-known result:
\begin{lemme}
\label{lem:free-mod}
The algebra $K_{\mathcal{T}}(\Gie(n,r))$ is a free $K_{\mathcal{T}}(\{*\})$-module with basis indexed by the $\mathcal{T}$-fixed points of $\Gie(n,r)$.
\end{lemme}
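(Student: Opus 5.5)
The plan is to use a torus-equivariant affine paving, or more precisely a Białynicki-Birula decomposition with respect to a generic one-parameter subgroup. This reduces the claim to the fact that each attracting cell is an affine space. Because $\Gis$ is not proper, I will not use attracting cells to fixed points in the usual sense. Instead I will take the \emph{repelling} direction, via a cocharacter $\rho\colon \C^\times\to\torus$ chosen so that $\lim_{t\to 0}\rho(t)\cdot x$ exists for every $x\in\Gis$.

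\textbf{Choosing the cocharacter.} I would pick $\rho$ so that the $\hbar$-type weights on $X,Y$ are both positive and the framing weights are generic. One could alternatively combine $\torus$ with $\C^\times_\hbar$; this does not change the fixed-point set, and by Proposition~\ref{prop:gieseker_fixed_point_structure} that set is finite, indexed by $\mpart$. With this choice, the properness of the map $\Gis\to\mu^{-1}(0)/\!\!/G_{\vv}$ together with the contracting action on the affine quotient shows that every limit exists. The Białynicki-Birula theorem for smooth quasi-projective varieties with such an action then gives a decomposition $\Gis=\bigsqcup_{\lbb\in\mpart} C_{\lbb}$ into locally closed $\torus$-stable affine spaces. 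These cells can be ordered compatibly with closures, giving a filtration by closed unions of cells.

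\textbf{The $K$-theory argument.} I would use the localization exact sequence
\[
K_\torus(Z)\to K_\torus(U\cup Z)\to K_\torus(U)\to 0
\]
and add cells one at a time. Each cell satisfies $K_\torus(C_{\lbb})\cong K_\torus(\mathrm{pt})=\CR$ by homotopy invariance, and it is free of rank one, generated by its structure sheaf. Induction produces a spanning set $\{[\CO_{\overline{C_{\lbb}}}]\}$, or equivalently a set of classes of resolutions of the cell closures.

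\textbf{Main obstacle: injectivity on the left.} Left exactness of the sequences is not automatic. I would handle it by restricting to fixed points. The localization map $K_\torus(\Gis)\to\bigoplus_{\lbb}\CR$ sends the spanning classes to a triangular matrix with respect to the closure order. Its diagonal entries are products of factors $(1-w^{-1})$, where $w$ runs over the normal weights at $\lbb$; these weights are given by Proposition~\ref{prop:weights_at_fixed_point} and are nonzero. The matrix is therefore nondegenerate, so the spanning classes are $\CR$-linearly independent, and the basis has exactly $|\mpart|$ elements, indexed by the fixed points. An alternative route is to quote Nakajima's result that $K_\torus$ of a quiver variety is free, or the Chriss–Ginzburg cellular fibration lemma, and to use the paving only for the count.
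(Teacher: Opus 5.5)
Your argument works once one step is repaired: the contracting cocharacter cannot be taken in $\torus$. Since $\mathbb{T}\subset\SL_2$ acts on $X$ and $Y$ with opposite weights, no cocharacter of $\torus$ makes both positive.

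Concretely, $\operatorname{tr}(XY)$ is a $\torus$-invariant function on $\mathcal{G}_0(n,r)$. It is not identically zero, but it vanishes on $\Gis^{\torus}$, where $X$ and $Y$ are nilpotent. So for any cocharacter of $\torus$ whose fixed locus is $\Gis^{\torus}$, the points with $\operatorname{tr}(XY)\neq 0$ have no limit. The $\C^\times_\hbar$-component you mention as an ``alternative'' is therefore mandatory.

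As in the paper, take $(\nu,t^N)$ with $\nu$ a generic cocharacter of $\torus$ and $N\gg 0$. Then $\mathcal{G}_0(n,r)$ is still contracted to $0$, and properness of $\Gis\to\mathcal{G}_0(n,r)$ gives all limits. Genericity of $\nu$ is what keeps the fixed locus equal to $\Gis^{\torus}$. Positivity of the $X,Y$ weights plus generic framing weights is not enough: for $r=1$ the framing torus acts trivially, and $\C^\times_\hbar$ alone fixes a $\mathbb{P}^1$ in $\operatorname{Hilb}_2(\mathbb{A}^2)$. The resulting cells are $\torus\times\C^\times_\hbar$-stable, in particular $\torus$-stable, which is all your $K$-theory argument uses.

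With this fix, your proof differs from the paper's only in how linear independence is obtained. The paper feeds the paving into the Chriss--Ginzburg cellular fibration lemma. That lemma yields short exactness of the localization sequences, hence freeness, directly and without any fixed-point computation.

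You use only right exactness to get the spanning set $\{[\mathcal{O}_{\overline{C_{\lbb}}}]\}$. You then prove independence by restricting to fixed points. The restriction matrix is triangular for the filtration order. The diagonal entry at $\lbb$ can be computed on the open $\torus$-stable set $\Gis\setminus(\overline{C_{\lbb}}\setminus C_{\lbb})$, where the closure is just the smooth cell $C_{\lbb}$; this gives $\prod_w(1-w^{-1})$. Your route is more hands-on, and it simultaneously proves injectivity of $\iota^*$ (the paper's next lemma, which the paper instead deduces from freeness). It relies on $\CR$ being a domain, which holds here.

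One citation is off. Proposition~\ref{prop:weights_at_fixed_point} gives the weights of $\CV|_{\lbb}$, not those of $T_{\lbb}\Gis$. What you actually need is that no normal weight restricts to the trivial character of $\torus$. This follows from $(T_{\lbb}\Gis)^{\torus}=T_{\lbb}(\Gis^{\torus})=0$, because $\lbb$ is an isolated fixed point of the smooth variety $\Gis$.
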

\begin{proof}
 $\Gis$ is a smooth variety over $\BC$ with finitely many fixed points under the action of $\torus\times \BC^{\times}_\hbar$. It's proper over $\mathcal{G}_0(n,r)$, the later is contracted to zero via the action of $\mathbb{C}^\times_\hbar$. We can thus take a generic torus $\mathcal{T}$ cocharacter $\nu$ and consider a cocharacter $(\nu, t^N)$ for $N$ big enough. This action will contract $\Gis$ to $\Gis^{\torus}$. Applying  Białynicki-Birula Theorem we obtain a $\torus\times \BC^{\times}_\hbar$-equivariant affine paving for $\torus\times \BC^{\times}_\hbar$  and, using cellular fibration lemma (\cite[Lemma 5.5.1]{CG}, obtain a basis for the $K_{\mathcal{T}}(\Gie(n,r))$ over $\CR=K_{\mathcal{T}}(\{*\})$.

\end{proof}

\noindent The morphism $\iota: \Gie(n,r)^{\mathcal{T}} \hookrightarrow \Gie(n,r)$  induces a restriction morphism 
\begin{equation*}
    \iota^*:K_{\mathcal{T}}(\Gie(n,r)) \to K_{\mathcal{T}}\left(\Gie(n,r)^{\mathcal{T}}\right). 
\end{equation*}

\begin{lemme}
The morphism $\iota^*$ is a monomorphism.
\end{lemme}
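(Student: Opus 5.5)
The plan is to combine the freeness statement of \cref{lem:free-mod} with the Thomason localization theorem. By \cref{lem:free-mod}, $K_{\mathcal{T}}(\Gie(n,r))$ is a free $\CR$-module of rank $|\mathcal{P}^r_n|$; in particular it is $\CR$-torsion free. Hence it embeds into its localization
\begin{equation*}
K_{\mathcal{T}}(\Gie(n,r))\otimes_{\CR}\operatorname{Frac}\CR.
\end{equation*}
It therefore suffices to show that $\iota^*$ becomes injective after tensoring with $\operatorname{Frac}\CR$. Indeed, if $\iota^*(x)=0$, then the image of $x$ in the localization is zero, so $fx=0$ for some nonzero $f\in\CR$, and torsion-freeness forces $x=0$.

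For the localized statement I would invoke Thomason's concentration (localization) theorem for the torus $\mathcal{T}$ acting on the smooth quasi-projective variety $\Gie(n,r)$. It says that after inverting the elements $1-\chi$ for the finitely many nontrivial characters $\chi$ occurring as weights, the pushforward $\iota_*\colon K_{\mathcal{T}}(\Gie(n,r)^{\mathcal{T}})\to K_{\mathcal{T}}(\Gie(n,r))$ is an isomorphism. Since $\Gie(n,r)$ is smooth, $\iota^*$ is defined. Moreover, by the self-intersection formula, the composition $\iota^*\iota_*$ is multiplication by the $K$-theoretic Euler class $\lambda_{-1}(N^\vee)$ of the conormal bundle at each fixed point.

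The fixed points are isolated and indexed by $\mathcal{P}^r_n$ (\cref{prop:gieseker_fixed_point_structure}). At each fixed point, this Euler class is a product of factors $1-w^{-1}$, where $w$ runs over the $\mathcal{T}$-weights of the tangent space $T_{\lbb}\Gie(n,r)$. These weights can be read off from \cref{prop:weights_at_fixed_point} and the standard character formula for the tangent space. Thus $\iota^*\iota_*$ is diagonal with entries that are nonzero elements of $\CR$, hence invertible in $\operatorname{Frac}\CR$. Combined with the bijectivity of $\iota_*$ after localization, this shows that $\iota^*$ is an isomorphism, and in particular injective, over $\operatorname{Frac}\CR$.

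The main point to check is that no tangent weight at a fixed point is trivial. Equivalently, the fixed points of $\mathcal{T}$ (and not merely of $\mathcal{T}\times\BC^\times_\hbar$) are isolated, so that each factor $1-w^{-1}$ is nonzero. This follows because $\Gie(n,r)^{\mathcal{T}}$ is the finite set $\mathcal{P}^r_n$ described in \cref{prop:gieseker_fixed_point_structure}: the tangent space to the fixed locus is the zero-weight part of $T_{\lbb}\Gie(n,r)$, and it vanishes. If one prefers to avoid Thomason's theorem in the non-proper setting, an alternative is a direct argument via the Białynicki-Birula paving from the proof of \cref{lem:free-mod}. The restriction to the fixed points of the classes of the closures of attracting cells is upper triangular with respect to the cell order, and its diagonal entries are the Euler classes of the repelling normal directions. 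These entries are nonzero, so the restriction matrix is nondegenerate.
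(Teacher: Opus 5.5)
Your proposal is correct and follows the same route as the paper: $\mathcal{R}$-freeness (hence torsion-freeness) of $K_{\mathcal{T}}(\Gie(n,r))$ from Lemma~\ref{lem:free-mod}, combined with Thomason's localization theorem, which makes $\iota^*$ an isomorphism over $\operatorname{Frac}\mathcal{R}$. Your extra step through the self-intersection formula, with the check that all tangent weights at the isolated fixed points are nontrivial, just spells out why the localized pullback (and not only the pushforward) is invertible; the paper leaves this implicit.
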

\begin{proof}
    By the Atiyah-Bott localization theorem \cite[Theorem 2.1]{Thom92}, the morphism $\iota^*$ is an isomorphism after tensoring it with $\mathcal{F}$. Combining \cref{lem:free-mod}, with the fact that the variety $\Gie(n,r)^{\mathcal{T}}$ is just a finite number of points, we obtain the claim.
\end{proof}

Recall that $\mu^{-1}(0)^{\mathrm{st}}$ denote the semistable points in the fiber of the moment map $\mu$, such that $\Gie(n,r) \simeq \mu^{-1}(0)^{\mathrm{st}} \sslash GL_n(\C)$.
One important source of vector bundles on $\Gie(n,r)$ (more generally on Nakajima quiver varieties) come from $\GL_n(\C)$-representations.  Indeed, if $V$ is such a representation, then one can consider the contracted product $\mu^{-1}(0)^{\mathfrak{st}} \times_{\GL_n(\C)} V$ since $\mu^{-1}(0)^{\mathrm{st}} \to \Gie(n,r)$ is a $\GL_n(\C)$-principal bundle \cite[Proposition 2.3.2]{NakJAMS}.

\begin{deff}
\label{def:taut}
For $V=\C^n$ denote the corresponding tautological vector bundle on $\Gis$ by $\CV$.
\end{deff}

This construction defines a homomorphism $K_{\GL_n(\C)}(\{*\}) \to K(\Gie(n,r))$ and, taking $\mathcal{T}$-equivariance, gives the homomorphism of algebras  $\kappa\colon K_{\GL_n(\C) \times \mathcal{T}}(\{*\}) \to K_{\mathcal{T}}(\Gie(n,r))$.

\begin{prop}
The homomorphism $\kappa$ is an epimorphism.
\end{prop}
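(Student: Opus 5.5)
The statement is the $K$-theoretic Kirwan surjectivity for the Gieseker variety. The plan is to reduce it to a known result rather than argue from scratch: Kirwan surjectivity in equivariant $K$-theory for Nakajima quiver varieties, proved by McGerty and Nevins for the $\mathbb{C}^\times_\hbar$-contracting, smooth, hyperkähler-type GIT quotients. For the Jordan quiver, $\Gis$ is exactly the quiver variety $\mu^{-1}(0)^{\mathrm{st}}/\GL_n(\C)$. The map $\kappa$ factors as
\[
K_{\GL_n(\C)\times\torus}(\{*\}) \xrightarrow{\ \sim\ } K_{\GL_n(\C)\times\torus}(\cotrepspace) \to K_{\GL_n(\C)\times\torus}(\mu^{-1}(0)^{\mathrm{st}}) \cong K_{\torus}(\Gis).
\]
The first arrow is the Thom isomorphism for the vector space $\cotrepspace$. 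The second is restriction; more precisely, one passes through $\mu^{-1}(0)$ using the derived/Koszul structure, which is where the McGerty--Nevins argument enters. The last isomorphism uses freeness of the $\GL_n(\C)$-action on $\mu^{-1}(0)^{\mathrm{st}}$.

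The core step is the surjectivity of $K_{\GL_n(\C)\times\torus}(\mu^{-1}(0)) \to K_{\GL_n(\C)\times\torus}(\mu^{-1}(0)^{\mathrm{st}})$ combined with the equality of images from $\cotrepspace$. I would invoke the theorem of McGerty--Nevins (Kirwan surjectivity for quiver varieties, including equivariant $K$-theory with respect to a torus commuting with $G_{\vv}$). Its hypotheses are:
\begin{itemize}
\item a $\mathbb{C}^\times$-action contracting the affine quotient $\mathcal{G}_0(n,r)$ to a point, supplied by $\mathbb{C}^\times_\hbar$;
\item smoothness and freeness on the stable locus;
\item properness of $\Gis\to\mathcal{G}_0(n,r)$.
\end{itemize}
All of these were already used in the proof of \cref{lem:free-mod}. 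One point to check is that passing from $\torus\times\mathbb{C}^\times_\hbar$-equivariance to $\torus$-equivariance is harmless. Surjectivity is preserved under forgetting $\mathbb{C}^\times_\hbar$, since $K_{\torus}(\Gis) = K_{\torus\times\mathbb{C}^\times_\hbar}(\Gis)\otimes_{\mathbb{Z}[\hbar^{\pm1}]}\mathbb{Z}$ by freeness (\cref{lem:free-mod} applied to both tori).

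As a sanity check, and as an alternative route if only a cohomological Kirwan result is available, I would use localization. By the injectivity of $\iota^*$ and \cref{lem:free-mod}, it suffices to show that the image of $\kappa$ is an $\CR$-submodule that is saturated of full rank. By \cref{prop:weights_at_fixed_point}, the restriction of $\kappa(f)$ to the fixed point $\lbb$ is $f$ evaluated at the multiset $\{q^{\ct(x)}Q_{c(x)}\}_{x\in\lbb}$. Over $\operatorname{Frac}\CR$ these multisets separate multipartitions, so the image of $\kappa$ spans $K_\torus(\Gis)\otimes\operatorname{Frac}\CR$.

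This only gives surjectivity after localization, and the integral statement is the main obstacle. To get it, I would fall back on the McGerty--Nevins theorem, or argue via the Białynicki-Birula paving that the tautological classes (for example, Chern roots of $\CV$ twisted appropriately) generate each graded piece of the attracting-cell filtration. I expect to use the cited Kirwan surjectivity theorem directly.
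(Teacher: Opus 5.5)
\textbf{There is a genuine gap in the step you rely on.} You invoke a $\mathcal{T}$-equivariant $K$-theoretic Kirwan surjectivity for quiver varieties. What McGerty--Nevins actually prove in $K$-theory \cite[Corollary 1.4]{McN18} is the statement for $K_{G}(\{*\})\to K(\mathfrak{M})$, which is non-equivariant with respect to any flavour torus. Equivariance for flavour tori is only mentioned there as an analogue that ``can be proven''; this is the point of the remark following the proposition.

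The passage from non-equivariant to $\mathcal{T}$-equivariant is not formal in $K$-theory. The cokernel of $\kappa$ is a finitely generated module over $K_{\operatorname{GL}_n(\mathbb{C})\times\mathcal{T}}(\{*\})$, so Nakayama requires surjectivity on the fibre at \emph{every} point $(g,t)\in\mathbb{T}_n\times\mathcal{T}$. Freeness (Lemma~\ref{lem:free-mod}) only identifies the fibre at $(1,1)$ with non-equivariant $K$-theory. In cohomology the grading would make that single fibre sufficient, but $K$-theory has no such grading. Your remark about forgetting $\mathbb{C}^\times_\hbar$ goes in the easy direction (more equivariant implies less equivariant) and does not address this. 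Your list of hypotheses (contracting $\mathbb{C}^\times$, free stable locus, properness) is also not what drives the McGerty--Nevins argument, which is quiver-specific.

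\textbf{Your fallback does not close the gap either.} Separating multipartitions by their content multisets shows that the image of $\kappa$ has full rank after $\otimes\operatorname{Frac}\mathcal{R}$. That is surjectivity on the generic fibre only. The ``saturation'' you defer is exactly the statement at the special fibres. The suggestion that tautological classes generate the graded pieces of the Bia\l ynicki-Birula filtration is essentially a restatement of the claim.

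\textbf{The missing idea is the reduction the paper uses.} Fix a point $(g,t)$.
\begin{enumerate}
\item Apply the Edidin--Graham localization theorem. It identifies the fibre at $(g,t)$ of
$K_{\operatorname{GL}_n(\mathbb{C})\times\mathcal{T}}(\{*\})\to K_{\operatorname{GL}_n(\mathbb{C})\times\mathcal{T}}(\mu^{-1}(0)^{\mathrm{st}})$
with the fibre at $(1,1)$ of the analogous map for $Z_{\operatorname{GL}_n(\mathbb{C})}(g)\times\mathcal{T}$ acting on $(\mu^{-1}(0)^{\mathrm{st}})^{(g,t)}$.
\item Observe that the centralizer together with the $(g,t)$-fixed part of the doubled quiver representation space is again a quiver gauge theory \cite[Proposition 2.13]{dumanski-krylov_2025}. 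Hence this fixed locus yields a quiver variety $\mathfrak{M}$.
\item Apply the \emph{non-equivariant} McGerty--Nevins surjectivity to $\mathfrak{M}$.
\end{enumerate}
This reduces everything to the result that is actually available, and it is exactly the upgrade from the single fibre at the identity to all fibres that your proposal lacks.
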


\begin{proof}

We need to show that the homomorphism
\begin{equation}\label{eq:homom_restr_open}
 K_{\operatorname{GL}_n(\mathbb{C}) \times \mathcal{T}}(\{*\}) \rightarrow K_{\operatorname{GL}_n(\mathbb{C}) \times \mathcal{T}}(\mu^{-1}(0)^{\mathrm{st}}) 
\end{equation} 
induced by the pullback $\mu^{-1}(0)^{\mathrm{st}} \subset \mu^{-1}(0) \rightarrow \{*\}$ is surjective. This is a homomorphism of $K_{\operatorname{GL}_n(\mathbb{C}) \times \mathcal{T}}(\{*\})=\mathbb{C}[\BT_n]^{\Sk_n} \otimes \mathbb{C}[\mathcal{T}]$-modules where $\BT_n$ denotes the maximal diagonal torus of $\GL_n(\C)$.
It's enough to show that the map \cref{eq:homom_restr_open} stays surjective after specialisation to any $(g,t) \in \BT_n \times \mathcal{T}$ . By the localization theorem (see, for example, \cite[Theorem 4.3]{Edidin-Graham-algebraic_cycles_K-theory}) combined with \cite[Proposition 5.4, (d)]{Edidin-Graham-algebraic_cycles_K-theory}, this fiber identifies with the fiber at $(1,1)$ of the pullback homomorphism:
\begin{equation}\label{eq:homom_restr_open_after_loc}
K_{Z_{\operatorname{GL}_n(\mathbb{C})}(g) \times \mathcal{T}}(\{*\}) \rightarrow K_{Z_{\operatorname{GL}_n(\mathbb{C})}(g) \times \mathcal{T}}((\mu^{-1}(0)^{\mathrm{st}})^{(g,t)})
\end{equation}
so it remains to prove that the fiber of \cref{eq:homom_restr_open_after_loc} at $(1,1)$ is surjective. Recall that $\cotrepspace$ is the cotangent bundle to the representation space of Jordan quiver \cref{eqn:cotrepspace}. Note now that 
\begin{equation*}
(Z_{\operatorname{GL}_n(\mathbb{C})}(g),\cotrepspace^{(g,t)})
\end{equation*}
comes from some quiver by \cite[Proposition 2.13]{dumanski-krylov_2025} (see also \cite[Section 2]{Pae25}), so the map \cref{eq:homom_restr_open_after_loc} identifies with the fiber at $(1,1)$ of the analogous pullback homomorphism corresponding to the quiver theory  $(Z_{\operatorname{GL}_n(\mathbb{C})}(g), \cotrepspace^{(g,t)})$. Set $\mathfrak{M}:=\mu_{(Z_{\operatorname{GL}_n(\mathbb{C})}(g),\cotrepspace^{(g,t)})}^{-1}(0)^{\mathrm{st}} \sslash Z_{\operatorname{GL}_n(\mathbb{C})}(g)$, where $\mu_{(Z_{\operatorname{GL}_n(\mathbb{C})}(g),\cotrepspace^{(g,t)})}$ is the corresponding moment map.

It remains to note that the homomorphism 
\begin{equation}\label{eq:MN_surj_nonequiv}
K_{Z_{\operatorname{GL}_n(\mathbb{C})}(g)}(\{*\}) \rightarrow K(\mathfrak{M}) = K_{Z_{\operatorname{GL}_n(\mathbb{C})}(g)}(\mu_{(Z_{\operatorname{GL}_n(\mathbb{C})}(g),\cotrepspace^{(g,t)})}^{-1}(0)^{\mathrm{st}})
\end{equation}
is surjective by \cite[Corollary 1.4]{McN18} (applied to the quiver variety $\mathfrak{M}$)  and taking the fiber of \cref{eq:MN_surj_nonequiv} we conclude the desired surjectivity.
\end{proof}

\begin{rmk}
    For cohomology this follows from \cite[Corollary 1.5]{McN18}. Authors of \cite{McN18} also write that ``Analogues of Corollary 1.5 can also be proven for $K$-theory and elliptic
cohomology equivariant with respect to a torus $\mathbb{T}$ or more general ``flavor symmetries'' of $\mathfrak{M}$''. 
\end{rmk}

To understand $K_{\mathcal{T}}(\Gie(n,r))$, it is thus enough to understand the image of a set of generators of $K_{\operatorname{GL}_n(\mathbb{C}) \times \mathcal{T}}(\{*\})$ over $K_{\mathcal{T}}(\{*\})$ under $ \iota^* \circ \kappa$. In light of \cref{prop:generators_in_k-theory} pick $[\Lambda^i\C^n]$, $[(\Lambda^n\C^n)^*]$ as such a set. 

\begin{prop}
\label{prop:cyclic-struct-geom} 
        Under the map $\iota^* \circ \kappa$ the image of $[\Lambda^i\C^n]$ in ${\mathcal{R}^{\oplus{|\mathcal{P}_n^r|}}}$  is the family  of elementary symmetric polynomials  
  \begin{equation*}
         \left (e_i(q^{ct_1^1}Q_1,\ldots,q^{ct_{|\lb^1|}^1}Q_1,\ldots, q^{ct_n^r}Q_r) \right)_{\lbb \in \mathcal{P}_n^r}
    \end{equation*}
and the image of $[(\Lambda^n\C^n)^*]$ is the inverse of $\iota^*(\kappa([\Lambda^n\C^n]))$.

\end{prop}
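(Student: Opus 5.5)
The plan is to compute the composite $\iota^*\circ\kappa$ one fixed point at a time. The target $K_{\mathcal{T}}(\Gie(n,r)^{\mathcal{T}})$ is $\bigoplus_{\lbb\in\mathcal{P}^r_n}\mathcal{R}$ by \cref{prop:gieseker_fixed_point_structure}. The $\lbb$-component of $\iota^*$ is the pullback along the inclusion $i_{\lbb}\colon\{\lbb\}\hookrightarrow\Gis$.

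\textbf{Step 1: reduce to the fiber of the tautological bundle.} Since $\kappa$ is given by the associated-bundle construction, and restriction commutes with tensor operations, we have $\iota^*\kappa([\Lambda^i\C^n])=(i_{\lbb}^*\Lambda^i\CV)_{\lbb}=([\Lambda^i(\CV|_{\lbb})])_{\lbb}$. Here $\CV|_{\lbb}$ is the fiber of the tautological bundle at the fixed point, viewed as a $\mathcal{T}$-module. For a $\mathcal{T}$-module with weights $w_1,\dots,w_n$, the class of $\Lambda^i$ in $\mathcal{R}$ is $e_i(w_1,\dots,w_n)$. This is the same computation as in \cref{prop:generators_in_k-theory}, applied to the restriction of $\GL_n\times\mathcal{T}$-representations along the homomorphism $\mathcal{T}\to\GL_n\times\mathcal{T}$ described in Step 2.

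\textbf{Step 2: identify the $\mathcal{T}$-action on the fiber.} At a fixed point $\lbb$, the quadruple $(X^{\lbb},Y^{\lbb},\gamma^{\lbb},\delta^{\lbb})$ of \cref{prop:gieseker_fixed_point_structure} is fixed up to the $\GL(V)$-action. So there is a homomorphism $\phi\colon\mathcal{T}\to\GL(V)$ such that $\tau\cdot p=\phi(\tau)^{-1}\cdot p$, and the $\mathcal{T}$-action on $\CV|_{\lbb}=V$ is through $\phi$. \Cref{prop:weights_at_fixed_point}, which rephrases \cite[Lemma 4.7]{KS25}, says that $\phi$ acts on the basis vector $[x^ay^b]\in\BC[x,y]/J_{\lb^k}$ by $q^{\ct}Q_k$, with $\ct$ the content of the corresponding box. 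The multiset of weights of $\CV|_{\lbb}$ is therefore $\{q^{\ct(x)}Q_c : x=(a,b,c)\in\BY(\lbb)\}$. Substituting into Step 1 gives the displayed family $e_i(q^{ct^1_1}Q_1,\dots,q^{ct^r_n}Q_r)$.

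\textbf{Step 3: the inverse determinant.} We have $[(\Lambda^n\C^n)^*]\cdot[\Lambda^n\C^n]=1$ in $K_{\GL_n\times\mathcal{T}}(\{*\})$, and $\iota^*\circ\kappa$ is a ring homomorphism. Hence the image of $[(\Lambda^n\C^n)^*]$ is the inverse of $\iota^*\kappa([\Lambda^n\C^n])$, namely $(\prod_{x}q^{\ct(x)}Q_c)^{-1}$ componentwise.

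The only delicate point is the sign and dualization convention in Step 2: whether the fiber carries the weights $q^{\ct}Q_k$ or their inverses, given that $\gamma\mapsto\gamma t^{-1}$ and $\mathbb{T}\subset\SL_2$ acts on $(X,Y)$ by $(zX,z^{-1}Y)$. I would check this on the case $n=1$ (one box, $\gamma(w_k)=1$) and then use $X\mapsto zX$ to propagate along the diagram. This check only fixes conventions, and they are exactly those already recorded in \cref{prop:weights_at_fixed_point}. So I would simply invoke that proposition rather than redo the computation.
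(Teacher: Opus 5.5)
Your proposal is correct and follows essentially the same route as the paper: restrict $\Lambda^i\mathcal{V}$ to each fixed point $\lbb$, read off the $\mathcal{T}$-weights $q^{\ct}Q_k$ of the fiber from \cref{prop:weights_at_fixed_point}, and take elementary symmetric polynomials in them. The one small difference is in the dual-determinant claim: you deduce it from $\iota^*\circ\kappa$ being a ring homomorphism, whereas the paper appeals directly to the torus action on the dual representation; both arguments are immediate.
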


\begin{proof}

   Under $\iota^* \circ \kappa$  the image of $[\Lambda^i\C^n]$ in the fixed point summand corresponding to $\lbb$ is the restriction $[\Lambda^i\mathcal{V}]|_{\lbb}$ of $[\Lambda^i\mathcal{V}]$ (\cref{def:taut}) to this fixed point. \cref{prop:weights_at_fixed_point} describes $[\mathcal{V}]|_{\lbb}$ as the representation of $\torus$. Indeed, recall that at each fixed point $\lbb$, $\torus$ acts on $\CV|_{\lbb}$ by the contents of Young diagrams in the corresponding multipartition, shifted by $(Q_1, \ldots, Q_r)$.
    Correspondingly, the restrictions of $[\Lambda^i\mathcal{V}]|_{\lbb}$ are given by the symmetric polynomials in the $\torus$-characters of $[\mathcal{V}]|_{\lbb}$.
   Their images can thus be written as 
    \begin{equation*}
       \iota^* \circ \kappa ([\Lambda^i\C^n])|_{\lbb} =  e_i(q^{ct_1^1}Q_1,\ldots,q^{ct_{|\lb^1|}^1}Q_1,\ldots, q^{ct_n^r}Q_r).
    \end{equation*}
    where $ct^i_j$ denotes the content of the box with number $j$ in $\lb^i$. Last part follows by the definition of the torus action on the representation dual to $(\Lambda^n\C^n)^*$.
    
\end{proof}

\noindent The global situation is as follows:
\begin{center}
\begin{tikzcd}
\& K_{\GL_n(\C) \times \mathcal{T}}(\{*\}) \ar[dl, "\kappa"', two heads] \ar[dr, "\pi", two heads] \& \\
K_{\mathcal{T}}(\Gie(n,r)) \ar[d,"\iota^*"', ->, hook] \&  \& \ZJM \ar[d, "\sigma"', ->, hook] \\
K_{\mathcal{T}}\left(\Gie(n,r)^{\mathcal{T}}\right) \ar[rr, no head, "{\finesim}"{marking, yshift=1ex}]  \&  \& {\mathcal{R}^{\oplus{|\mathcal{P}_n^r|}}}
\end{tikzcd}
\end{center}

\begin{thm}\label{thm:main}
\label{main_thm}
The following $\C[q^{\pm 1}, Q_1^{\pm 1},\dots,Q_r^{\pm 1}]$-algebras are isomorphic:
\begin{equation*}
K_{\mathcal{T}}\left(\Gie(n,r)\right) \iso \ZJM.
\end{equation*}
The isomorphism is given by:
\begin{equation*}
[\Lambda^i\mathbb{C}^n] \mapsto e_i(\mathcal{L}_1,\ldots, \mathcal{L}_n),~[{(\Lambda^n\C^n)}^*] \mapsto  \mathcal{L}_1^{-1} \cdot \ldots \cdot \mathcal{L}_n^{-1}.
\end{equation*}
\end{thm}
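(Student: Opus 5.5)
The plan is to use the diagram preceding the statement. Both $\kappa$ and $\pi$ are surjections out of the same ring $K_{\GL_n(\C)\times\mathcal{T}}(\{*\})$, and both $\iota^*$ and $\sigma$ are injections into $\mathcal{R}^{\oplus|\mathcal{P}^r_n|}$. Here we identify $K_{\mathcal{T}}(\Gie(n,r)^{\mathcal{T}})$ with $\bigoplus_{\lbb\in\mathcal{P}^r_n}\mathcal{R}$ via \cref{prop:gieseker_fixed_point_structure}. So it suffices to show that the square commutes, i.e.
\begin{equation*}
\iota^*\circ\kappa=\sigma\circ\pi
\end{equation*}
as maps into $\mathcal{R}^{\oplus|\mathcal{P}^r_n|}$. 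Granting this, we get $\ker\kappa=\ker(\iota^*\circ\kappa)=\ker(\sigma\circ\pi)=\ker\pi$, using injectivity of $\iota^*$ and of $\sigma$ (\cref{lemma: sigma-inj}). Both $K_{\mathcal{T}}(\Gie(n,r))$ and $\ZJM$ are then the quotient of $K_{\GL_n(\C)\times\mathcal{T}}(\{*\})$ by the same ideal. This yields an $\mathcal{R}$-algebra isomorphism $K_{\mathcal{T}}(\Gie(n,r))\iso\ZJM$ sending $\kappa(x)\mapsto\pi(x)$, which is exactly the formula in the statement.

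Since all maps are $\mathcal{R}$-algebra homomorphisms, commutativity only needs to be checked on the generators from \cref{prop:generators_in_k-theory}: the classes $[\Lambda^i\C^n]$ for $1\le i\le n$ and $[(\Lambda^n\C^n)^*]$. Fix $\lbb$ and take the $\lbb$-component.
\begin{itemize}
\item \emph{Geometric side.} By \cref{prop:cyclic-struct-geom}, $\iota^*\kappa([\Lambda^i\C^n])_{\lbb}$ is $e_i$ evaluated at the multiset $\{q^{\ct(x)}Q_c\}$, where $x$ runs over the nodes of $\lbb$ and $c$ is the component of $x$.
\item \emph{Algebraic side.} By \cref{prop:cyclic-struct-alg}, $\sigma\pi([\Lambda^i\C^n])_{\lbb}=\alpha_{\lbb}(e_i)=e_i(\alpha_t(1),\dots,\alpha_t(n))$ for any $t\in\mathrm{SYT}(\lbb)$. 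By \cref{lem:cyclic-struct-alg}, $\alpha_t(j)=q^{b-a}Q_c$ for the node $(a,b,c)=t^{-1}(j)$.
\end{itemize}
As $j$ runs over $1,\dots,n$, the node $t^{-1}(j)$ runs bijectively over the nodes of $\lbb$. So the two multisets coincide, and $e_i$ is symmetric, hence the values agree.

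For $[(\Lambda^n\C^n)^*]$ the argument is the same. Its image under $\pi$ is $\prod_j\Lj_j^{-1}$, and $\sigma$ sends it to $\prod_j\alpha_t(j)^{-1}$. On the geometric side, \cref{prop:cyclic-struct-geom} gives the inverse of the product of the same weights.

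The main point requiring care is the convention matching between the two sides. The content $\ct(i,j)=j-i$ and the monomial placement in \cref{prop:weights_at_fixed_point} must agree with the content $b-a$ in \cref{lem:cyclic-struct-alg}, and the labelling of the torus $\BT_r$ characters $Q_k$ must match the cyclotomic parameters. A transposition of the multipartition, or an inversion $q\leftrightarrow q^{-1}$ or $Q_k\leftrightarrow Q_k^{-1}$, would only relabel fixed points or apply an automorphism of $\mathcal{R}$. I would first check directly that the conventions as stated coincide. If they do not, I would precompose with the appropriate bijection of $\mathcal{P}^r_n$ (for instance componentwise transposition) on one side. That bijection is a permutation of summands, so it preserves both injectivity and the kernel comparison.

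A second, minor point is that $\pi$ is well defined on $[(\Lambda^n\C^n)^*]$. This holds because each $\Lj_j$ is invertible in $\Heck_{n,r}$: $\Lj_1$ is invertible since the $Q_i$ are units, and $\Lj_{j+1}=q^{-1}T_j\Lj_jT_j$ with $T_j$ invertible.
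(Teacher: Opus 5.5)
Your proposal is correct and follows the paper's own proof. The paper also reduces, via the diagram with $\kappa,\pi$ surjective and $\iota^*,\sigma$ injective, to checking $\iota^*\circ\kappa=\sigma\circ\pi$ on the generators $[\Lambda^i\C^n]$ and $[(\Lambda^n\C^n)^*]$, which it does by comparing \cref{prop:cyclic-struct-alg}, \cref{lem:cyclic-struct-alg} and \cref{prop:cyclic-struct-geom}; your explicit kernel comparison, the invertibility of the $\mathcal{L}_j$, and the convention check only spell out details the paper leaves implicit.
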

\begin{proof}
It is enough to show that for each:
\begin{equation*}
    (\iota^*\circ\kappa)[\Lambda^i\C^n]=(\sigma\circ\pi)[\Lambda^i\C^n], \qquad \forall i \in \llbracket 1, n \rrbracket
\end{equation*}
and that
\begin{equation*}
(\iota^*\circ\kappa)([{(\Lambda^n\C^n)}^*])=(\sigma\circ\pi)([({\Lambda^n\C^n)}^*]).
\end{equation*}
This follows from comparing \cref{prop:cyclic-struct-alg}, together with \cref{lem:cyclic-struct-alg}, and \cref{prop:cyclic-struct-geom}. 
\end{proof}

\begin{rmk}\label{rem:basis in JM center}
    \cref{lem:free-mod} says that there exists a geometrically defined basis in $\KTGis$ over $\CR$. It would be interesting to see what does it correspond to on the algebraic side, namely in $\ZJM$.
\end{rmk}

\begin{rmk}
  Note that $K_{\mathcal{T}}(\mathcal{G}(n,r))$ has a one-parametric deformation given by $K_{\mathcal{T} \times \mathbb{C}^\times_\hbar}(\mathcal{G}(n,r))$. We do not know what corresponds to this deformation on the cyclotomic affine Hecke algebra side. 
\end{rmk}



  \section{Pairing and the cocenter}\label{sec:center-cocenter}
  \subsection{Geometric side}
Consider the (affinization) morphism $\mathcal{G}(n,r) \rightarrow \mathcal{G}_0(n,r)$ and let $\mathcal{L}(n,r) \subset \mathcal{G}(n,r)$ be the preimage of $0 \in \mathcal{G}_0(n,r)$. Variety $\mathcal{L}(n,r)$ is proper and singular. 

\

It follows from \cite[Theorem 7.3.5]{NakJAMS} that there is a perfect pairing:
\begin{equation}\label{eq:geom_pairing}
K_{\mathcal{T}}(\mathcal{G}(n,r)) \times K_{\mathcal{T}}(\mathcal{L}(n,r)) \ni (F,F') \mapsto p_*(F \otimes^L_{\mathcal{G}(n,r)} F')  \in K_{\mathcal{T}}(\{*\}),
\end{equation}
where $p\colon \mathcal{G}(n,r) \rightarrow \{*\}$ is the constant map. We will denote this pairing by $\langle\,-,-\,\rangle$.

Note that $\mathcal{L}(n,r)^{\mathcal{T}} = \mathcal{G}(n,r)^{\mathcal{T}}=\mathcal{P}^r_n$.  Let us denote by
\begin{equation*}
\iota_{\mathcal{L}}\colon \mathcal{L}(n,r)^{\mathcal{T}} \hookrightarrow \mathcal{L}(n,r),~\iota_{\mathcal{G}}\colon \mathcal{G}(n,r)^{\mathcal{T}} \hookrightarrow \mathcal{G}(n,r)
\end{equation*}
the natural embeddings. 
\begin{rmk} 
Previously, we were denoting $\iota_{\mathcal{G}}$ simply by $\iota$. 
\end{rmk}
They induce morphisms $\iota_{\mathcal{L},*}$, $\iota_{\mathcal{G}}^*$ both of which are $\mathcal{R}$-linear and become isomorphisms after tensoring by $\operatorname{Frac}(\mathcal{R})$. Let $\langle-,-\rangle_{\mathcal{P}^r_n}$ be the standard pairing on $K_{\mathcal{T}}(\mathcal{P}^r_n)$ given by 
\begin{equation*}
\langle [\mathcal{O}_{\lambda^\bullet}],[\mathcal{O}_{\mu^\bullet}]\rangle_{\mathcal{P}^{r}_n} = \delta_{\lambda^\bullet,\mu^{\bullet}}.
\end{equation*}

Note that $K_{\mathcal{T}}(\mathcal{L}(n,r))$ is a {\emph{module}} over the algebra $K_{\mathcal{T}}(\mathcal{G}(n,r))$ with the action given by the (derived) tensor product. The following lemma is standard and ensures the compatibility of the pairings with each other as well as with the module structures.
\begin{lemme}\label{lem: push pull for iota are adjoint}
(a) For $E \in K_{\mathcal{T}}(\mathcal{G}(n,r))$, $F, F' \in K_{\mathcal{T}}(\mathcal{L}(n,r))$, we have $\langle E \otimes^L F,F'\rangle = \langle F,E \otimes^L F'\rangle$.

(b) For $E \in K_{\mathcal{T}}(\mathcal{G}(n,r))$ and $P \in \KTmpart$ we have
$
\langle E, \iota_{\mathcal{L},*}P\rangle = \langle \iota_{\mathcal{G}}^*E,P\rangle_{\mathcal{P}^r_n}.
$
\end{lemme}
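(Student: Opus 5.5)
Part (a) follows from associativity and commutativity of the derived tensor product together with the definition of the pairing. The main point to check is that $\langle E\otimes^L F, F'\rangle$ makes sense. The pairing \cref{eq:geom_pairing} is defined on $K_{\mathcal{T}}(\mathcal{G}(n,r))\times K_{\mathcal{T}}(\mathcal{L}(n,r))$, so for two classes supported on $\mathcal{L}(n,r)$ we read it in the same way: $\langle F_1,F_2\rangle := p_*(F_1\otimes^L_{\mathcal{G}(n,r)}F_2)$. Here $F_1$ is pushed forward to $\mathcal{G}(n,r)$ and regarded as a class there. The derived tensor product of two complexes is set-theoretically supported on $\mathcal{L}(n,r)$, which is proper, so $p_*$ is well defined. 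Because $\mathcal{G}(n,r)$ is smooth, every coherent sheaf has a finite locally free resolution, and derived tensor products are defined and associative and commutative at the level of $K$-theory. Hence
\[
\langle E\otimes^L F,F'\rangle = p_*\bigl((E\otimes^L F)\otimes^L F'\bigr)=p_*\bigl(F\otimes^L (E\otimes^L F')\bigr)=\langle F,E\otimes^L F'\rangle.
\]
In this computation $E\otimes^L F$ is viewed as a class in $K_{\mathcal{T}}(\mathcal{L}(n,r))$, and the module structure is the one via $K_{\mathcal{T}}(\mathcal{G}(n,r))$ mentioned above. I would state this convention explicitly, since that is the only place where care is needed.

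For part (b), the plan is the projection formula for the proper map $\iota_{\mathcal{L}}$ composed with the closed embedding $\mathcal{L}(n,r)\hookrightarrow\mathcal{G}(n,r)$. Write $j=\mathcal{L}(n,r)\hookrightarrow\mathcal{G}(n,r)$. The composite $j\circ\iota_{\mathcal{L}}$ equals $\iota_{\mathcal{G}}$, because $\mathcal{L}(n,r)^{\mathcal{T}}=\mathcal{G}(n,r)^{\mathcal{T}}$. By $\mathcal{R}$-linearity it suffices to take $P=[\mathcal{O}_{\lambda^\bullet}]$. Then
\[
\langle E,\iota_{\mathcal{L},*}P\rangle = p_*\bigl(E\otimes^L \iota_{\mathcal{G},*}\mathcal{O}_{\lambda^\bullet}\bigr) = p_*\iota_{\mathcal{G},*}\bigl(\iota_{\mathcal{G}}^*E\otimes \mathcal{O}_{\lambda^\bullet}\bigr),
\]
where the second equality is the $K$-theoretic projection formula. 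It holds because $\iota_{\mathcal{G}}$ is a proper map and $\mathcal{G}(n,r)$ is smooth, so $E$ is represented by a perfect complex and $\iota_{\mathcal{G}}^*E$ is its ordinary derived pullback.

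Finally, $p\circ\iota_{\mathcal{G}}$ is the map from the finite set of fixed points to a point. Pushforward along it sums the values at the individual points. Therefore the right-hand side above equals $(\iota_{\mathcal{G}}^*E)|_{\lambda^\bullet}$. This is exactly the coefficient of $[\mathcal{O}_{\lambda^\bullet}]$ in $\iota_{\mathcal{G}}^*E$, that is, $\langle \iota_{\mathcal{G}}^*E,[\mathcal{O}_{\lambda^\bullet}]\rangle_{\mathcal{P}^r_n}$.

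I expect the only real subtlety to be bookkeeping rather than mathematics. One has to make sure that the pairing on $K_{\mathcal{T}}(\mathcal{L}(n,r))$ in (a) and the pushforward $\iota_{\mathcal{L},*}$ in (b) are both read through $j_*$. One also has to justify the projection formula in the equivariant setting with a proper support condition; I would cite the standard reference for this, for instance \cite{CG}.
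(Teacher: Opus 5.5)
Your proposal is correct and follows the paper's proof: (a) is associativity and commutativity of $\otimes^L$, and (b) is the projection formula $E\otimes^L \iota_{\mathcal{G},*}P=\iota_{\mathcal{G},*}(\iota_{\mathcal{G}}^*E\otimes P)$ followed by pushing forward to a point. The extra bookkeeping you supply is what the paper leaves implicit: reading classes on $\mathcal{L}(n,r)$ through $j_*$, reducing to $P=[\mathcal{O}_{\lambda^\bullet}]$, and evaluating at the fixed point.
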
 
\begin{proof}
Part $(a)$ follows from commutativity and associativity of the tensor product. 

Let us prove part $(b)$.
By adjunction (projection formula), we have 
\begin{equation*}
E \otimes^L_{\mathcal{G}(n,r)} \iota_{\mathcal{G},*}P = \iota_{\mathcal{G},*}(\iota_{\mathcal{G}}^*(E) \otimes_{\mathcal{P}^r_n} P)
\end{equation*}
and part (b) follows. 
\end{proof}

\subsection{Algebraic side}
From now on, until the end of \cref{sec:center-cocenter} we assume that $q \neq 1$.
Let $\operatorname{Tr}({\bf{H}}_{n,r})$ be the {\emph{cocenter}} of the algebra ${\bf{H}}_{n,r}$, i.e. ${\bf{H}}_{n,r}/\!\left [{\bf{H}}_{n,r},{\bf{H}}_{n,r} \right ]$.  
In \cite[Definition 2.4]{HuShi} authors consider the map $\tau_{\mathcal{R}} \colon {\bf{H}}_{n,r} \rightarrow \mathcal{R}$ defined by 
\begin{equation*}
\tau_{\mathcal{R}}(L_1^{a_1}\dots L_n^{a_n}T_w):=\begin{cases}
1, &\text{if $w=1$ and $a_1=\dots=a_n=0$;}\\
0, &\text{otherwise.}
\end{cases}
\end{equation*}
and prove (see \cite[Proposition 2.2]{MM98}) that $\tau_{\mathcal{R}}$ induces the symmetric $\mathcal{R}$-algebra structure on ${\bf{H}}_{n,r}$. In particular, we have a perfect pairing:
\begin{equation}\label{eq:pairing between center and cocenter}
Z({\bf{H}}_{n,r}) \times \operatorname{Tr}({\bf{H}}_{n,r}) \rightarrow \mathcal{R},~(a,b) \mapsto \tau_{\mathcal{R}}(ab).
\end{equation}
Abusing notations, we will denote the pairing (\ref{eq:pairing between center and cocenter}) by $\langle-,-\rangle$.

Recall that we have an $\mathcal{R}$-linear homomorphism of algebras $\sigma\colon Z({\bf{H}}_{n,r}) \rightarrow \bigoplus_{\mathcal{P}^r_n}\mathcal{R}$. Taking the dual and identifying the dual to $Z({\bf{H}}_{n,r})$ with $\operatorname{Tr}({\bf{H}}_{n,r})$ via the pairing (\ref{eq:pairing between center and cocenter}) we obtain the $\mathcal{R}$-linear map $\bigoplus_{\mathcal{P}^r_n}\mathcal{R} \rightarrow \operatorname{Tr}({\bf{H}}_{n,r})$ to be denoted $\sigma^*$. 
The following lemma mimics Lemma \ref{lem: push pull for iota are adjoint} above.
\begin{lemme}\label{lem:pairing on algebraic side is compatible with module structure and duality}
    (a) For $a,b \in Z({\bf{H}}_{n,r})$ and $c \in \operatorname{Tr}({\bf{H}}_{n,r})$ we have $\langle ab,c\rangle=\langle b,ac\rangle$.
    
    (b) For $a \in Z({\bf{H}}_{n,r})$ and $x \in \bigoplus_{\mathcal{P}^r_n} \mathcal{R}$ we have $\langle a,\sigma^*(x)\rangle = \langle \sigma(a),x\rangle_{\mathcal{P}^r_n}$.
\end{lemme}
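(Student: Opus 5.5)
Both parts follow by unwinding definitions; the only care needed is to check that the module structure on the cocenter and the pairing are well defined.

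For (a), I first note that $Z({\bf{H}}_{n,r})$ acts on $\operatorname{Tr}({\bf{H}}_{n,r})$ by $a\cdot \bar h := \overline{ah}$. This is well defined: for central $a$ one has $a[h,h'] = [ah,h']$, so multiplication by $a$ preserves $[{\bf{H}}_{n,r},{\bf{H}}_{n,r}]$. I also recall why the pairing (\ref{eq:pairing between center and cocenter}) descends to the cocenter. Since $\tau_{\mathcal{R}}$ is a symmetrizing trace (\cite[Proposition 2.2]{MM98}), it vanishes on commutators. Hence for central $a$,
\[
\tau_{\mathcal{R}}(a[h,h'])=\tau_{\mathcal{R}}([ah,h'])=0 .
\]
With this in place, take $c=\bar h$. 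Then
\[
\langle ab,c\rangle=\tau_{\mathcal{R}}(abh)=\tau_{\mathcal{R}}(b\,ah)=\langle b, ac\rangle ,
\]
using only associativity and $ab=ba$; the latter holds because both are central. So (a) is essentially associativity, exactly as in Lemma \ref{lem: push pull for iota are adjoint}(a).

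For (b), I make the definition of $\sigma^*$ explicit. The pairing (\ref{eq:pairing between center and cocenter}) is perfect, so it identifies $\operatorname{Tr}({\bf{H}}_{n,r})\cong \operatorname{Hom}_{\mathcal{R}}(Z({\bf{H}}_{n,r}),\mathcal{R})$. The pairing $\langle-,-\rangle_{\mathcal{P}^r_n}$ likewise identifies $\bigoplus_{\mathcal{P}^r_n}\mathcal{R}$ with its own dual. Under these identifications $\sigma^*$ is, by definition, the transpose $x\mapsto \langle \sigma(-),x\rangle_{\mathcal{P}^r_n}$, followed by the inverse of the isomorphism $\operatorname{Tr}\to Z^\vee$. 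Concretely, $\sigma^*(x)$ is the unique element $c\in\operatorname{Tr}({\bf{H}}_{n,r})$ with $\langle a,c\rangle=\langle\sigma(a),x\rangle_{\mathcal{P}^r_n}$ for all $a\in Z({\bf{H}}_{n,r})$. This is precisely the identity claimed in (b), so (b) is a tautology once the definition is written out.

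The only point requiring a check is the use of perfectness over $\mathcal{R}$ rather than over a field. I need the map $\operatorname{Tr}({\bf{H}}_{n,r})\to Z({\bf{H}}_{n,r})^\vee$ to be a bijection, so that $\sigma^*$ lands in, and is uniquely determined in, $\operatorname{Tr}({\bf{H}}_{n,r})$. I will take this from the perfectness statement \cite[Lemma 7.1.7]{GeckPeiffer}, \cite[Equation (2.5)]{HuShi} as quoted before the lemma. That statement relies on ${\bf{H}}_{n,r}$ being free of finite rank and symmetric over $\mathcal{R}$. I expect no further difficulty.
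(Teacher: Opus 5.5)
Your proposal is correct and follows the paper's proof. Part (a) is the one-line computation $\tau_{\mathcal{R}}(abh)=\tau_{\mathcal{R}}(bah)$ using centrality of $a$, and part (b) is the defining property of $\sigma^*$ as the transpose of $\sigma$ under the two perfect pairings. Your checks that the $Z({\bf{H}}_{n,r})$-action and the pairing descend to the cocenter are correct details that the paper leaves implicit.

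One imprecision in your last paragraph: freeness and symmetry of ${\bf{H}}_{n,r}$ only give $Z({\bf{H}}_{n,r})\cong\operatorname{Tr}({\bf{H}}_{n,r})^\vee$. Bijectivity of $\operatorname{Tr}({\bf{H}}_{n,r})\to Z({\bf{H}}_{n,r})^\vee$ additionally needs the cocenter to be free (or reflexive), which is where the Hu--Shi basis result is needed. Since both you and the paper take perfectness as given, this does not affect the lemma.
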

\begin{proof}
Part $(a)$ follows from the equality $\langle ab,c\rangle=\tau_{\mathcal{R}}(abc)=\tau_{\mathcal{R}}(bac)=\langle b,ac\rangle$ (use that $a$ is central). 
Part $(b)$ follows from the definition of $\sigma^*$. 
\end{proof}

We define the action $Z({\bf{H}}_{n,r}) \curvearrowright \bigoplus_{\mathcal{P}^r_n} \mathcal{R}$ via $\sigma$. Namely, $a \cdot x = \sigma(a)x$ for $a \in Z({\bf{H}}_{n,r})$ and $x \in \bigoplus_{\mathcal{P}^r_n} \mathcal{R}$.    
\begin{lemme}\label{lem:sigma dual is a homomorphism}
The map $\sigma^*$ is a homomorphism of $Z({\bf{H}}_{n,r})$-modules.
\end{lemme}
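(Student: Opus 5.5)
We need to show that $\sigma^*(a\cdot x)=a\,\sigma^*(x)$ for all $a\in Z({\bf{H}}_{n,r})$ and $x\in\bigoplus_{\mathcal{P}^r_n}\mathcal{R}$. Here the right-hand side uses the natural action of the center on the cocenter, $a\cdot[h]=[ah]$, which is well defined because $a$ is central and so $a[h,h']=[ah,h']$. The plan is to test both sides against an arbitrary $b\in Z({\bf{H}}_{n,r})$ using the perfect pairing (\ref{eq:pairing between center and cocenter}). Since the pairing is perfect, two elements of $\operatorname{Tr}({\bf{H}}_{n,r})$ with the same pairings against every central element are equal.

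First, by Lemma \ref{lem:pairing on algebraic side is compatible with module structure and duality}(b),
\[
\langle b,\sigma^*(a\cdot x)\rangle=\langle \sigma(b),\sigma(a)x\rangle_{\mathcal{P}^r_n}.
\]
The standard pairing on $\bigoplus_{\mathcal{P}^r_n}\mathcal{R}$ is componentwise multiplication followed by summation, and the algebra structure there is componentwise. Hence the right-hand side equals $\langle \sigma(a)\sigma(b),x\rangle_{\mathcal{P}^r_n}$. Because $\sigma$ is an algebra homomorphism and the center is commutative, this is $\langle\sigma(ab),x\rangle_{\mathcal{P}^r_n}$. Applying (b) again turns it into $\langle ab,\sigma^*(x)\rangle$. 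Finally, part (a) of the same lemma, with the roles of $a$ and $b$ swapped using commutativity, gives $\langle ab,\sigma^*(x)\rangle=\langle b,a\,\sigma^*(x)\rangle$.

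Combining these equalities, $\langle b,\sigma^*(a\cdot x)-a\,\sigma^*(x)\rangle=0$ for every $b\in Z({\bf{H}}_{n,r})$, and perfectness of the pairing gives the claim.

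The only points needing care are that $\sigma$ is multiplicative and that the pairing is perfect, so that $\sigma^*$ is well defined and the last step is valid. Multiplicativity holds because each $\alpha_{\lbb}$ is the scalar by which $z$ acts on $S^{\lbb}$, and scalars of a product of central elements multiply (Lemma \ref{lemma: End ring small}). Perfectness is the symmetric-algebra result quoted just before the statement. No step here is a real obstacle.
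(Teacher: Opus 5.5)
Your proposal is correct and follows essentially the same argument as the paper. You test against an arbitrary central $b$, move $\sigma(a)$ across the componentwise pairing on $\bigoplus_{\mathcal{P}^r_n}\mathcal{R}$, use that $\sigma$ is multiplicative, return via parts (b) and (a) of the compatibility lemma, and conclude by non-degeneracy of the center--cocenter pairing in the cocenter variable, which is exactly what the paper's ``it is enough to show'' relies on.
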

\begin{proof}
It is enough to show that $\langle \sigma^*(a \cdot x),b\rangle=\langle a\sigma^*(x),b\rangle$ for any $a,b \in Z({\bf{H}}_{n,r})$ and ${x \in \bigoplus_{\mathcal{P}^r_n} \mathcal{R}}$. Using Lemma \ref{lem:pairing on algebraic side is compatible with module structure and duality} together with the fact that $\sigma$ is a homomorphism  we get:
\begin{multline*}
\langle \sigma^*(a \cdot x),b\rangle = \langle a \cdot x,\sigma(b)\rangle_{\mathcal{P}^r_n}=\langle \sigma(a)x,\sigma(b)\rangle_{\mathcal{P}^r_n} = \\ 
=\langle x,\sigma(a)\sigma(b)\rangle_{\mathcal{P}^r_n}=\langle x,\sigma(ab)\rangle_{\mathcal{P}^r_n}=\langle \sigma^*(x),ab\rangle = \langle a\sigma^*(x),b\rangle
\end{multline*}
as desired.
\end{proof}

\subsection{Main conjecture}\label{subsec:main conjecture} Form now on we will use the following notation. For a $\mathcal{R}$-module $P$ we will denote by $P_{\mathrm{loc}}$ the tensor product $P \otimes_{\mathcal{R}} \operatorname{Frac}\mathcal{R}$. 
\begin{lemme} \label{lemma: unique-iso-loc}  There exists a unique isomorphism of $Z({\bf{H}}_{n,r})_{\mathrm{loc}}$-modules:
\begin{equation}\label{eq:iso cocenter K theory of L after loc}
K_{\mathcal{T}}(\mathcal{L}(n,r))_{\mathrm{loc}} \simeq \operatorname{Tr}({\bf{H}}_{n,r})_{\mathrm{loc}}
\end{equation}
that intertwines pairings (\ref{eq:geom_pairing}), (\ref{eq:pairing between center and cocenter}) as well as $\iota_{\mathcal{L},*}$ with $\sigma^*$.    
\end{lemme}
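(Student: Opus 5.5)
After localization, $\iota_{\mathcal{L},*}$ and $\sigma^*$ are both isomorphisms out of $K_{\mathcal{T}}(\mathcal{P}^r_n)_{\mathrm{loc}}=\bigoplus_{\mathcal{P}^r_n}\operatorname{Frac}\mathcal{R}$. I will therefore take the composite
\begin{equation*}
\Phi:=\sigma^*_{\mathrm{loc}}\circ(\iota_{\mathcal{L},*})_{\mathrm{loc}}^{-1}\colon K_{\mathcal{T}}(\mathcal{L}(n,r))_{\mathrm{loc}}\to \operatorname{Tr}({\bf{H}}_{n,r})_{\mathrm{loc}}
\end{equation*}
as the candidate. By construction it intertwines $\iota_{\mathcal{L},*}$ with $\sigma^*$, and this already forces uniqueness. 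The remaining work is to check that both maps really are isomorphisms after localization, that $\Phi$ respects the pairings, and that it is $Z({\bf{H}}_{n,r})_{\mathrm{loc}}$-linear.

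For $(\iota_{\mathcal{L},*})_{\mathrm{loc}}$ I use the localization theorem (\cite[Theorem 2.1]{Thom92}), as was noted just before Lemma \ref{lem: push pull for iota are adjoint}. For $\sigma^*$ I argue as follows.
\begin{itemize}
\item The proof of Lemma \ref{lemma: sigma-inj} shows that $\sigma_{\mathrm{loc}}$ is an isomorphism on $\ZJM_{\mathrm{loc}}$, which has rank $|\mathcal{P}^r_n|$.
\item By \cite[Theorem 1.4]{HuShi} and the generic semisimplicity \cite[Theorem 3.20]{ArikiKoike}, $Z({\bf{H}}_{n,r})_{\mathrm{loc}}$ also has dimension $|\mathcal{P}^r_n|$ over $\operatorname{Frac}\mathcal{R}$. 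Since $\ZJM\subset Z({\bf{H}}_{n,r})$ by Lemma \ref{lemma:incl-center}, the two agree after localization.
\item Hence $\sigma_{\mathrm{loc}}\colon Z({\bf{H}}_{n,r})_{\mathrm{loc}}\to\bigoplus\operatorname{Frac}\mathcal{R}$ is an isomorphism. The pairing (\ref{eq:pairing between center and cocenter}) is perfect, so dualizing shows that $\sigma^*_{\mathrm{loc}}$ is an isomorphism as well.
\end{itemize}

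Next I transport the pairings. Any $F'\in K_{\mathcal{T}}(\mathcal{L}(n,r))_{\mathrm{loc}}$ can be written as $F'=\iota_{\mathcal{L},*}P$. For $E\in K_{\mathcal{T}}(\mathcal{G}(n,r))_{\mathrm{loc}}$, Lemma \ref{lem: push pull for iota are adjoint}(b) gives $\langle E,F'\rangle=\langle\iota_{\mathcal{G}}^*E,P\rangle_{\mathcal{P}^r_n}$. On the algebraic side, Lemma \ref{lem:pairing on algebraic side is compatible with module structure and duality}(b) gives $\langle a,\sigma^*P\rangle=\langle\sigma(a),P\rangle_{\mathcal{P}^r_n}$.

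To compare the two I need an identification of $K_{\mathcal{T}}(\mathcal{G}(n,r))$ with $Z({\bf{H}}_{n,r})_{\mathrm{loc}}$. I use Theorem \ref{thm:main}, which gives $K_{\mathcal{T}}(\mathcal{G}(n,r))\cong\ZJM$ with $\iota^*_{\mathcal{G}}$ corresponding to $\sigma$ (this is the commutative diagram), and then $\ZJM_{\mathrm{loc}}=Z({\bf{H}}_{n,r})_{\mathrm{loc}}$ from the step above. Under this identification the two formulas coincide, so $\langle E,F'\rangle=\langle E,\Phi(F')\rangle$.

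Module compatibility follows in the same way. $\iota_{\mathcal{L},*}$ is linear for the action of $K_{\mathcal{T}}(\mathcal{G}(n,r))$ on $K_{\mathcal{T}}(\mathcal{P}^r_n)$ through $\iota_{\mathcal{G}}^*$, by the projection formula. $\sigma^*$ is $Z$-linear by Lemma \ref{lem:sigma dual is a homomorphism}. Since the identification matches $\iota_{\mathcal{G}}^*$ with $\sigma$, the module structures on $\bigoplus\operatorname{Frac}\mathcal{R}$ agree, and so $\Phi$ is a module map.

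The main obstacle is the dimension count showing $\ZJM_{\mathrm{loc}}=Z({\bf{H}}_{n,r})_{\mathrm{loc}}$, which is needed for $\sigma^*_{\mathrm{loc}}$ to be an isomorphism. I would get it from the fact that the generic Ariki–Koike algebra is split semisimple with simple modules indexed by $\mathcal{P}^r_n$, so its center has dimension $|\mathcal{P}^r_n|$. An alternative route to the same count is via the perfect pairing with the cocenter, whose rank is $|\mathcal{P}^r_n|$ by \cite{HuShi}.
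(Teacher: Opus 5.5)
Your proposal is correct and uses essentially the paper's argument run in the opposite direction. The paper defines the isomorphism as the transpose of $K_{\mathcal{T}}(\mathcal{G}(n,r))_{\mathrm{loc}}\simeq Z({\bf{H}}_{n,r})_{\mathrm{loc}}$ under the two perfect pairings, and then uses the two part-(b) adjunction lemmas to show it matches $\iota_{\mathcal{L},*}$ with $\sigma^*$. You instead define it as $\sigma^*\circ\iota_{\mathcal{L},*}^{-1}$ and derive the pairing compatibility from the same lemmas. Your explicit generic-semisimplicity check that $Z({\bf{H}}_{n,r})^{\mathrm{JM}}_{\mathrm{loc}}=Z({\bf{H}}_{n,r})_{\mathrm{loc}}$ supplies a step the paper's ``Clearly'' leaves implicit.
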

\begin{proof}
Clearly, there exists the unique identification of $\operatorname{Frac}\mathcal{R}$-modules  $K_{\mathcal{T}}(\mathcal{L}(n,r))_{\mathrm{loc}} \simeq \operatorname{Tr}({\bf{H}}_{n,r})_{\mathrm{loc}}$ intertwining the pairings (\ref{eq:geom_pairing}), (\ref{eq:pairing between center and cocenter}). 
We already know that $\iota_{\mathcal{G}}^*$ identifies with $\sigma$ so  Lemmas \ref{lem: push pull for iota are adjoint}, \ref{lem:pairing on algebraic side is compatible with module structure and duality} imply that $\iota_{\mathcal{L},*}$ identifies with $\sigma^*$. The compatibility with module structures follows from \cref{lem:sigma dual is a homomorphism}. 
\end{proof}

We make the following conjecture. 
\begin{conj}\label{conj: iso cocenter and K theory of L}
The isomorphism (\ref{eq:iso cocenter K theory of L after loc}) induces the isomorphism $K_{\mathcal{T}}(\mathcal{L}(n,r)) \simeq \operatorname{Tr}(\Heck_{n,r})$ of $Z(\Heck_{n,r})$-modules. 
\end{conj}

The following proposition (see Proposition \ref{prop:our prop claiming that if conj holds then JM is equal to center} below) is an immediate corollary of Conjecture \ref{conj: iso cocenter and K theory of L}. It is already known to hold in the following cases. 
When $r=1$ this is \cite[Theorem 7.2]{FG06}, when $n=2$, this is \cite[Theorem 2.5]{McG12} and when the algebra is semi-simple see \cite[Theorem 3.20]{ArikiKoike}.

\

We know that $\ZJM \subset Z(\Heck_{n,r})$ cf. \cref{lemma:incl-center}.
It follows from the construction of the isomorphisms in \cref{thm:main} and \cref{conj: iso cocenter and K theory of L} that the following diagram commutes.
\begin{center}
\begin{tikzcd}[column sep=large]
\& \ZJM  \ar[dl, ->,hook] \ar[r, -, "\sim"', "\cref{thm:main}"] \& K_{\mathcal{T}}(\Gie(n,r))  \ar[d, <->, "{\rotatebox[origin=c]{90}{$\sim$}}"] \\
Z(\Heck_{n,r}) \ar[r, <->, "\sim"] \& {\operatorname{Tr}(\Heck_{n,r})}^*  \ar[r, -, "\sim"', "\cref{conj: iso cocenter and K theory of L}"]\& {K_{\mathcal{T}}(\mathcal{L}(n,r))}^* 
\end{tikzcd}.
\end{center}

\begin{prop}\label{prop:our prop claiming that if conj holds then JM is equal to center} 
Assuming \cref{conj: iso cocenter and K theory of L} holds, Jucys-Murphy's center $\ZJM_{q \neq 1} $ is equal to the center $Z(\Heck_{n,r})_{q \neq 1}$.  
\end{prop}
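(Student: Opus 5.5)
We argue by duality, using the commutative diagram displayed just before the statement. Throughout, $q\neq 1$, so that $\tau_{\mathcal{R}}$ makes $\Heck_{n,r}$ a symmetric algebra and the pairing (\ref{eq:pairing between center and cocenter}) is perfect, identifying $Z(\Heck_{n,r})$ with $\operatorname{Tr}(\Heck_{n,r})^*=\Hom_{\mathcal{R}}(\operatorname{Tr}(\Heck_{n,r}),\mathcal{R})$. On the geometric side, the pairing (\ref{eq:geom_pairing}) identifies $K_{\mathcal{T}}(\mathcal{G}(n,r))$ with $K_{\mathcal{T}}(\mathcal{L}(n,r))^*$. Assuming \cref{conj: iso cocenter and K theory of L}, we have an integral isomorphism $\Phi\colon K_{\mathcal{T}}(\mathcal{L}(n,r))\iso \operatorname{Tr}(\Heck_{n,r})$ compatible with the pairings. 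Dualizing gives an $\mathcal{R}$-linear isomorphism
\begin{equation*}
\Psi\colon Z(\Heck_{n,r})\iso \operatorname{Tr}(\Heck_{n,r})^*\iso K_{\mathcal{T}}(\mathcal{L}(n,r))^*\iso K_{\mathcal{T}}(\mathcal{G}(n,r)).
\end{equation*}

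The key step is to show that $\Psi$ restricted to $\ZJM$ is exactly the isomorphism of \cref{thm:main}; this is the commutativity of the diagram. It suffices to check it after tensoring with $\operatorname{Frac}\mathcal{R}$, since all modules involved are $\mathcal{R}$-torsion free. This holds because $Z(\Heck_{n,r})\subset \Heck_{n,r}$ with $\Heck_{n,r}$ free, and $K_{\mathcal{T}}(\mathcal{G}(n,r))$ is free by \cref{lem:free-mod}.

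After localization, \cref{lemma: unique-iso-loc} says that $\Phi_{\mathrm{loc}}$ intertwines $\iota_{\mathcal{L},*}$ with $\sigma^*$. Since $\iota_{\mathcal{L},*}$ becomes an isomorphism after localization, this forces the following. Take $a\in \ZJM$ and let $E\in K_{\mathcal{T}}(\mathcal{G}(n,r))$ be its image under \cref{thm:main}, so that $\iota_{\mathcal{G}}^*E=\sigma(a)$. Then for every $P\in K_{\mathcal{T}}(\mathcal{P}^r_n)$, \cref{lem: push pull for iota are adjoint}(b) and \cref{lem:pairing on algebraic side is compatible with module structure and duality}(b) give
\begin{equation*}
\langle E,\iota_{\mathcal{L},*}P\rangle=\langle \iota_{\mathcal{G}}^*E,P\rangle_{\mathcal{P}^r_n}=\langle\sigma(a),P\rangle_{\mathcal{P}^r_n}=\langle a,\sigma^*P\rangle.
\end{equation*}
The elements $\iota_{\mathcal{L},*}P$ span $K_{\mathcal{T}}(\mathcal{L}(n,r))_{\mathrm{loc}}$, so $\Psi(a)=E$. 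Hence the diagram commutes.

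Finally, the top arrow $\ZJM\to K_{\mathcal{T}}(\mathcal{G}(n,r))$ is an isomorphism by \cref{thm:main}. Commutativity identifies it with $\Psi$ composed with the inclusion $\ZJM\hookrightarrow Z(\Heck_{n,r})$. Since $\Psi$ is an isomorphism, this inclusion must be surjective, and therefore $\ZJM=Z(\Heck_{n,r})$ for $q\neq1$.

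The main obstacle is verifying the commutativity integrally rather than just after localization. The point of the argument is that integrality is not needed for this check: torsion-freeness lets us test equality of maps after localization, and the integral conjecture is used only to guarantee that $\Psi$ is an isomorphism over $\mathcal{R}$.
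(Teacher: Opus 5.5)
Your proposal is correct and follows the paper's own argument: the paper likewise deduces the statement from the commutative square relating $\ZJM \hookrightarrow Z(\Heck_{n,r}) \simeq \operatorname{Tr}(\Heck_{n,r})^*$ with $K_{\mathcal{T}}(\mathcal{G}(n,r)) \simeq K_{\mathcal{T}}(\mathcal{L}(n,r))^*$, and concludes that the inclusion must be surjective. The paper only asserts that this square commutes ``by construction''. You supply that verification by pairing against the spanning set $\iota_{\mathcal{L},*}P$ and using the adjunction lemmas together with torsion-freeness.
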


We finish this section by the following lemma. 
\begin{lemme}
Isomorphism (\ref{eq:iso cocenter K theory of L after loc}) induces an embedding $\operatorname{Tr}(\Heck_{n,r}) \hookrightarrow K_{\mathcal{T}}(\mathcal{L}(n,r))$.   
\end{lemme}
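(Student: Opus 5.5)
The plan is to realize both $\operatorname{Tr}(\Heck_{n,r})$ and $K_{\mathcal{T}}(\mathcal{L}(n,r))$ as $\mathcal{R}$-duals of algebras that we already control, and to show that the dual of the inclusion of algebras is the desired embedding. Throughout, $q \neq 1$ as in this section.

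The first step is to record the two dualities. By the perfect pairing \eqref{eq:pairing between center and cocenter}, $\operatorname{Tr}(\Heck_{n,r}) \simeq \Hom_{\mathcal{R}}(Z(\Heck_{n,r}),\mathcal{R})$. By the perfect pairing \eqref{eq:geom_pairing}, $K_{\mathcal{T}}(\mathcal{L}(n,r)) \simeq \Hom_{\mathcal{R}}(K_{\mathcal{T}}(\Gie(n,r)),\mathcal{R})$. By \cref{thm:main}, the latter is $\Hom_{\mathcal{R}}(\ZJM,\mathcal{R})$.

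The second step is to restrict functionals along the inclusion $\ZJM \subset Z(\Heck_{n,r})$ of \cref{lemma:incl-center}. This gives an $\mathcal{R}$-linear map
\begin{equation*}
\operatorname{Tr}(\Heck_{n,r}) \simeq Z(\Heck_{n,r})^{*} \longrightarrow (\ZJM)^{*} \simeq K_{\mathcal{T}}(\mathcal{L}(n,r)).
\end{equation*}
We must check that after $\otimes_{\mathcal{R}}\operatorname{Frac}\mathcal{R}$ this map is the isomorphism \eqref{eq:iso cocenter K theory of L after loc}. By the uniqueness in \cref{lemma: unique-iso-loc}, it suffices to see that it intertwines the pairings, which holds by construction. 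For this we need $\ZJM_{\mathrm{loc}} = Z(\Heck_{n,r})_{\mathrm{loc}}$. Both sides have rank $|\mathcal{P}^r_n|$ generically, by \cref{lemma: sigma-inj} and the generic semisimplicity \cite[Theorem 3.20]{ArikiKoike}. Since one is contained in the other, they agree after localization. In particular, the restriction map is the lattice-level form of \eqref{eq:iso cocenter K theory of L after loc}, so "induces" makes sense.

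The third step, which I expect to be the main obstacle, is injectivity of the restriction map $Z(\Heck_{n,r})^* \to (\ZJM)^*$. Let $\phi$ be a functional vanishing on $\ZJM$. Then $\phi_{\mathrm{loc}}$ vanishes on $\ZJM_{\mathrm{loc}} = Z(\Heck_{n,r})_{\mathrm{loc}}$. Because $Z(\Heck_{n,r})$ is a submodule of the free module $\Heck_{n,r}$, it is torsion free and embeds into its localization. Also $\phi$ takes values in the torsion-free ring $\mathcal{R}$. Hence $\phi=0$.

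The point to be careful about is that one should not confuse duals with the cocenter at the lattice level. This is where the \emph{perfectness} of the symmetrizing-form pairing \cite{MM98} is used: it identifies $\operatorname{Tr}(\Heck_{n,r})$ exactly with $Z(\Heck_{n,r})^*$, rather than with a sublattice. Once that is granted, the whole argument reduces to the elementary fact that restriction of $\mathcal{R}$-valued functionals along an inclusion of torsion-free modules of equal generic rank is injective. This yields the embedding $\operatorname{Tr}(\Heck_{n,r}) \hookrightarrow K_{\mathcal{T}}(\mathcal{L}(n,r))$.
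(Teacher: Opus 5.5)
Your proposal is correct and is essentially the paper's argument written in dual form. The paper checks that the image of $x\in\operatorname{Tr}(\Heck_{n,r})$ pairs $\mathcal{R}$-integrally with $\ZJM\simeq K_{\mathcal{T}}(\Gie(n,r))$, because $\tau_{\mathcal{R}}$ is $\mathcal{R}$-valued, and then uses perfectness of (\ref{eq:geom_pairing}) to place it in $K_{\mathcal{T}}(\mathcal{L}(n,r))$; this is your restriction of $\tau_{\mathcal{R}}(-\cdot x)$ to $\ZJM$, and your explicit injectivity argument fills in a point the paper leaves implicit, where only torsion-freeness of $\operatorname{Tr}(\Heck_{n,r})$ is needed rather than the full identification $\operatorname{Tr}(\Heck_{n,r})\simeq Z(\Heck_{n,r})^*$.
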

\begin{proof}
Pick an element $x \in \operatorname{Tr}(\Heck_{n,r})$, our goal is to check that its image under (\ref{eq:iso cocenter K theory of L after loc}) lives in $K_{\mathcal{T}}(\mathcal{L}(n,r))$. 
Since $K_{\mathcal{T}}(\mathcal{L}(n,r))$ is dual to $K_{\mathcal{T}}(\mathcal{G}(n,r)) \simeq \ZJM$ and (\ref{eq:iso cocenter K theory of L after loc}) is compatible with the pairings it is enough to show that the pairing of $x$ with elements of $\ZJM$ are in $\mathcal{R}$. This follows from the definition of the symmetrizing form $\tau_{\mathcal{R}}$. 
\end{proof}



  \section{Applications}\label{sec:Applications}




\subsection{Hilbert scheme of points} 
For $r=1$ it follows from \cite[Theorem 2.1, Proposition 2.10]{NakajimaBook} that $\mathcal{G}(n,1) \simeq \operatorname{Hilb}_n(\mathbb{A}^2)$. The algebra ${\bf{H}}_{n,1}(R,{\boldsymbol{q}})$ is equal to ${\bf{H}}_n(q) \otimes \mathbb{C}[Q_1^{\pm 1}]$, where ${\bf{H}}_n(q)$ is the type $A$ Hecke algebra. 
The Jucys-Murphy’s center of ${\bf{H}}_n(q)\otimes \mathbb{C}[Q_1^{\pm 1}]$ is generated by the elements $\mathcal{L}_i$ that are inductively defined by the equations:
\begin{equation*}
\mathcal{L}_1=Q_1,~q\mathcal{L}_{i+1} = T_i\mathcal{L}_iT_i.
\end{equation*}
It follows from \cite[Theorem 7.2]{FG06} that for $q \neq 1$, we have an equality 
\begin{equation*}
Z({\bf{H}}_n(q))^{\mathrm{JM}}_{q \neq 1} = Z({\bf{H}}_n(q))_{q \neq 1}.
\end{equation*}
So, we see that Theorem \ref{thm:main} implies the following description of the equivariant $K$-theory of the Hilbert scheme when specializing $Q_1$ to $1$.


\begin{cor}\label{cor:Hilb case}
There exists an isomorphism of algebras $K_{\mathbb{T}}(\operatorname{Hilb}_n(\mathbb{A}^2)) \simeq Z({\bf{H}}_n(q))^{\mathrm{JM}}$ explicitly given by: 

\begin{equation*}
[\Lambda^i\mathbb{C}^n] \mapsto e_i(\mathcal{L}_1,\ldots, \mathcal{L}_n),~[{(\Lambda^n\C^n)}^*] \mapsto  \mathcal{L}_1^{-1} \cdot \ldots \cdot \mathcal{L}_n^{-1}.
\end{equation*}

It induces the isomorphism  $K_{\mathbb{T}}(\operatorname{Hilb}_n(\mathbb{A}^2))_{q \neq 1} \simeq Z({\bf{H}}_n(q))_{q \neq 1}$.
\end{cor}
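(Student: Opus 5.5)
The plan is to specialize \cref{thm:main} to $r=1$ and then identify both sides with the objects named in the corollary.

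First, the geometric side. By \cite[Theorem 2.1, Proposition 2.10]{NakajimaBook} we have $\mathcal{G}(n,1)\simeq \operatorname{Hilb}_n(\mathbb{A}^2)$. For $r=1$ the torus is $\mathcal{T}=\mathbb{T}\times \BT_1$, and $\BT_1=\mathbb{C}^\times$ acts on $\Gie(n,1)$ by $\gamma\mapsto \gamma t^{-1}$, $\delta\mapsto t\delta$. This is the restriction of the $G_{\vv}$-action to the scalar matrices, composed with an inverse, so $\BT_1$ acts trivially on the quotient. Hence
$K_{\mathcal{T}}(\Gie(n,1))\simeq K_{\mathbb{T}}(\operatorname{Hilb}_n(\mathbb{A}^2))\otimes \mathbb{C}[Q_1^{\pm1}]$, and the tautological bundle $\mathcal{V}$ picks up the character $Q_1$. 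This is consistent with \cref{prop:weights_at_fixed_point}, where the weights are $q^{\ct}Q_1$. Specializing $Q_1\mapsto 1$, that is, tensoring with $\mathcal{R}/(Q_1-1)$, gives $K_{\mathbb{T}}(\operatorname{Hilb}_n(\mathbb{A}^2))$. The free-module statement \cref{lem:free-mod} ensures that specialization commutes with taking $K$-theory.

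Second, the algebraic side. For $r=1$ the cyclotomic relation reads $\Lj_1=Q_1$, so ${\bf H}_{n,1}={\bf H}_n(q)\otimes\mathbb{C}[Q_1^{\pm1}]$, with $\Lj_{i+1}=q^{-1}T_i\Lj_iT_i$. Therefore $\ZJM$ is $Z({\bf H}_n(q))^{\mathrm{JM}}\otimes \mathbb{C}[Q_1^{\pm1}]$. To pass to $Q_1=1$, I would use that the Ariki–Koike basis is compatible with specialization and that $\ZJM$ is the image of $\pi$, so its specialization is again the image of the symmetric Laurent polynomials in the specialized $\Lj_i$. The isomorphism of \cref{thm:main} is defined on generators by the same formulas, so it descends to an isomorphism after $\otimes_{\mathcal{R}}\mathcal{R}/(Q_1-1)$, and the explicit formulas are preserved.

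Third, the last assertion. Localizing at $q-1$, the result \cite[Theorem 7.2]{FG06} gives $Z({\bf H}_n(q))^{\mathrm{JM}}_{q\neq1}=Z({\bf H}_n(q))_{q\neq1}$. Their generators $L_i$ satisfy $\Lj_i=(q-1)L_i+1$, so the two subalgebras agree once $q-1$ is invertible. Composing with the isomorphism from the previous step gives $K_{\mathbb{T}}(\operatorname{Hilb}_n(\mathbb{A}^2))_{q\neq1}\simeq Z({\bf H}_n(q))_{q\neq1}$.

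The main obstacle is the specialization step: one must check that the image of $Z^{\mathrm{JM}}$ under $Q_1\mapsto1$ is exactly the JM-center of ${\bf H}_n(q)$. Here the direct tensor decomposition ${\bf H}_{n,1}={\bf H}_n(q)\otimes\mathbb{C}[Q_1^{\pm1}]$, together with the fact that $\Lj_i=Q_1\cdot\Lj_i|_{Q_1=1}$, makes this immediate. Indeed, $e_i(\Lj)=Q_1^i e_i(\Lj|_{Q_1=1})$, so the grading by powers of $Q_1$ splits off cleanly.
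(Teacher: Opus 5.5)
Your proposal is correct and follows the same route as the paper. You specialize \cref{thm:main} to $r=1$ using $\mathcal{G}(n,1)\simeq\operatorname{Hilb}_n(\mathbb{A}^2)$ and ${\bf{H}}_{n,1}={\bf{H}}_n(q)\otimes\mathbb{C}[Q_1^{\pm1}]$, set $Q_1=1$, and invoke \cite[Theorem 7.2]{FG06} after inverting $q-1$. Your extra checks (that $\BT_1$ acts trivially on $\mathcal{G}(n,1)$, and that $e_i(\mathcal{L})=Q_1^i\,e_i(\mathcal{L}|_{Q_1=1})$) just make explicit the specialization step the paper leaves implicit, and with that splitting the appeal to \cref{lem:free-mod} is unnecessary.
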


\subsection{$q$ equal to $1$}
One can specialize \cref{main_thm} to $q~=~1$ recovering the isomorphism of algebras 
\begin{equation*}
\ZJM|_{q=1} \simeq K_{\mathbb{T}_r}(\mathcal{G}(n,r)).
\end{equation*}
Note that $\ZJM|_{q=1}$ is {\emph{not}} isomorphic to the JM-center of ${\bf{H}}_{n,r}|_{q=1}$. Indeed, ${\bf{H}}_{n,r}|_{q=1}$ is simply the smash product $\mathbb{C}\Sk_n \# P_{\boldsymbol{Q}}$ where  
\begin{equation*}
P_{\boldsymbol{Q}} = \mathbb{C}[L_1, \ldots, L_n]/\Big(\prod_{i=1}^r(L_1-Q_i),\ldots,\prod_{i=1}^r(L_k-Q_i),\ldots,\prod_{i=1}^r(L_{n}-Q_i)\Big).
\end{equation*}
Its JM-center is equal to $P_{\boldsymbol{Q}}^{\Sk_n}$ which has dimension less than $|\mathcal{P}^r_n|$ in general (see, for example, \cite{McG12}). Note that $\ZJM|_{q=1}$ is {\emph{not}} a subalgebra of the smash product $\mathbb{C}\Sk_n \# P_{\boldsymbol{Q}}$. It would be interesting to describe the specialization $\ZJM|_{q=1}$ explicitly.

\

For example, if we further specialize to 
\begin{equation*}
{\boldsymbol{Q}}=(Q_1,\ldots,Q_r)=(a_1,\ldots,a_r)=:{\boldsymbol{a}}
\end{equation*} 
for {\emph{distinct}} $a_{k} \in \mathbb{C}^\times$, $k=1,\ldots,r$
then we get
\begin{multline*}
\ZJM|_{q=1;{\boldsymbol{Q}}={\boldsymbol{a}}} = K(\mathcal{G}(n,r)^{\boldsymbol{a}})=\bigoplus_{n=n_1+\ldots+n_r} \bigotimes_{k=1,\ldots ,r}K(\operatorname{Hilb}_{n_k}(\mathbb{A}^2)) \simeq \\ 
\simeq \bigoplus_{n=n_1+\ldots+n_r} \bigotimes_{k=1,\ldots ,r}H^*(\operatorname{Hilb}_{n_k}(\mathbb{A}^2)) \simeq \bigoplus_{n=n_1+\ldots+n_r} \bigotimes_{k=1,\ldots ,r}\operatorname{gr}Z(\mathbb{C}\Sk_{n_k}),
\end{multline*}
where $Z(\mathbb{C}\Sk_{n_k})$ is the center of $\mathbb{C}\Sk_{n_k}$ and $\operatorname{gr}$ is taken with respect to the filtration given by $\operatorname{deg}\sigma := n_k-\ell(\sigma)$ (here $\sigma \in \Sk_{n_k}$ and $\ell(\sigma)$ is the number of cycles in the decomposition of $\sigma$ into the product of disjoint cycles). In this chain of the isomorphisms first equality is Theorem \ref{thm:main} combined with the localization theorem, the second equality follows from the explicit description of the fixed points $\mathcal{G}(n,r)^{\boldsymbol{a}}$ (\cite[Page 18]{Nakajima-Yoshioka-instanton-counting}), the third equality is the Chern character and the last equality follows from \cite{VascohomHilb} (see also \cite[Sections 4, 5]{KS25}, \cite[Corollary 4.8]{SVV}). 

\subsection{$q$ root of unity}
In this section, we will specialize the parameters $\boldsymbol{q}$ to deepen the combinatorial correspondences in \cite{Pae26}.

\vspace{0.3cm}

To keep consistent notation with \cite{Pae26}, let us take $\ell \in \Z_{> 1}$ and a multicharge $\ssf \in (\Z/\ell\Z)^r$.
In this subsection, we will specialize $q$ to $\zeta_{\ell}$ a primitive $\ell$-root of unity and $Q_i$ to $q^{\ssf_i}$. Let us denote by $H_{n,r}(\ell,\ssf)$ this finite dimensional $\C$-algebra. We want to show that the specialization of  \cref{main_thm} to this case gives a better understanding on the main results of \cite{Pae26}.  Recall that blocks of a finite-dimensional   $\C$-algebra are indecomposable, two-sided ideals. Being an Artinian ring, blocks are uniquely defined, up to permutation. This gives the following decomposition:
\begin{equation}
\label{block decp}
    \Heck_{n,r}(\ell,\ssf) = \bigoplus_{i=1}^k{B_i}.
\end{equation}

A first thing to notice is the following lemma.

\begin{lemme}
    The following equality holds:
    \begin{equation*}
         Z(\Heck_{n,r}(\ell,\ssf)) = \bigoplus_{i=1}^k{Z(B_i)}.
    \end{equation*}
\end{lemme}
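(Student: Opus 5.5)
The plan is to use the standard fact that a block decomposition of a finite-dimensional algebra into two-sided ideals is a decomposition as a direct product of algebras. We then compute the center of such a product componentwise.

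First, from \eqref{block decp}, write $1 = e_1 + \dots + e_k$ with $e_i \in B_i$. Since $B_i B_j \subset B_i \cap B_j = 0$ for $i \neq j$, each $e_i$ acts as a unit on $B_i$. Indeed, for $b \in B_i$ we have $b = b\cdot 1 = b e_i$, and similarly $b = e_i b$. Moreover $e_i$ is central: for $h = \sum_j b_j$ with $b_j \in B_j$, we get $e_i h = b_i = h e_i$. Hence the map $\Heck_{n,r}(\ell,\ssf) \to \prod_i B_i$, $h \mapsto (e_i h)_i$, is an isomorphism of unital $\C$-algebras, where each $B_i$ is a unital algebra with unit $e_i$.

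Second, we show that the center of a finite product of unital algebras is the product of the centers. Let $z = \sum_i z_i$ with $z_i = e_i z \in B_i$. If $z$ is central, then for each $b \in B_i$ we have $z_i b = z b = b z = b z_i$, because all the other components multiply $b$ to zero. So $z_i \in Z(B_i)$. Conversely, if each $z_i \in Z(B_i)$ and $h = \sum_j b_j$, then
\[
z h = \sum_i z_i b_i = \sum_i b_i z_i = h z .
\]
This gives the equality $Z(\Heck_{n,r}(\ell,\ssf)) = \bigoplus_i Z(B_i)$ as subspaces of $\Heck_{n,r}(\ell,\ssf)$, and in fact as algebras.

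There is no genuine obstacle here. The only point needing care is the observation that the blocks are ideals with trivial pairwise products, which holds because the sum \eqref{block decp} is direct and the $B_i$ are two-sided ideals.
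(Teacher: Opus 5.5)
Your proof is correct and follows essentially the paper's argument. The paper likewise writes $1=\sum_i \epsilon_i$ with central idempotents, sets $B_i=\Heck_{n,r}(\ell,\ssf)\epsilon_i$, and concludes $Z(B_i)=Z(\Heck_{n,r}(\ell,\ssf))\epsilon_i$; the only differences are that you derive the centrality of the idempotents from the two-sided ideal decomposition and spell out the componentwise verification, which the paper leaves implicit.
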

\begin{proof}
Equality \cref{block decp}, comes from the existence (and uniqueness, up to permutation) of a family of primitive central idempotents $(\epsilon_i)_{i \in \llbracket 1, k \rrbracket}$, such that $1=\sum_{i=1}^k{\epsilon_i}$. From there $B_i:= \Heck_{n,r}(\ell,\ssf)\epsilon_i$. Since the idempotents are central, it follows that $Z(B_i)=Z(\Heck_{n,r}(\ell,\ssf))\epsilon_i$, hence the desired equality.
\end{proof}

By \cite[Theorem 4.3]{Edidin-Graham-algebraic_cycles_K-theory}, after specialization, the left hand side of \cref{main_thm}, is isomorphic to $K(\Gie(n,r)^{\Gamma_{\ssf}})$, where $\Gamma_{\ssf}$ is the cyclic subgroup of $\mathcal{T}$, generated by $(\mathrm{diag}(\zeta_{\ell},\zeta_{\ell}^{\smin 1}),\mathrm{diag}(\zeta_{\ell}^{\ssf_1}, \dots, \zeta_{\ell}^{\ssf r}))$.
Recall from \cite{Pae26}, that the indexing set of the connected components of $\Gie(n,r)^{\Gamma_{\ssf}}$ is denoted by 
\begin{equation*}
\mathcal{A}_{\Gamma,\ssf}^n:=\left\{d \in Q^+_{\Gamma} \mid \sum_{i=0}^{\ell}{d_i}=n, \Lambda^{\ssf} - d \in P(\Lambda^{\ssf})\right\}
\end{equation*}
and these components are isomorphic to affine type $A$ quiver varieties.

The following proposition should be considered as an upgrade of the combinatorial bijection observed in \cite{Pae26}.
\begin{prop}\label{prop:raphael_block_upgrade_at_q_root_of_unity}
    The $\C$-algebra $K\left(\Gie(n,r)^{\Gamma_{\ssf}}\right)$ admits the following block decomposition:
    \begin{equation*}
        K\left(\Gie(n,r)^{\Gamma_{\ssf}}\right) \simeq \bigoplus_{d \in \mathcal{A}_{\Gamma,s}}{K\left(\QV_d^{\Gamma_{\ssf}}\right)}.
    \end{equation*}
\end{prop}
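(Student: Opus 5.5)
The plan is to deduce the decomposition from the description of the connected components of $\Gie(n,r)^{\Gamma_{\ssf}}$ recalled from \cite{Pae26}, and then to check that each summand is indecomposable as an algebra.

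First, the finite group $\Gamma_{\ssf}$ acts on the smooth variety $\Gie(n,r)$, so the fixed locus $\Gie(n,r)^{\Gamma_{\ssf}}$ is smooth. It decomposes as a disjoint union of its connected components
\begin{equation*}
\Gie(n,r)^{\Gamma_{\ssf}} = \bigsqcup_{d \in \mathcal{A}^n_{\Gamma,\ssf}} \QV_d^{\Gamma_{\ssf}},
\end{equation*}
where by \cite{Pae26} the components are indexed by $\mathcal{A}^n_{\Gamma,\ssf}$ and are isomorphic to affine type $A$ quiver varieties. Since $K$-theory of a finite disjoint union of closed and open pieces is the product of the $K$-theories, and the tensor-product ring structure is componentwise, we get an algebra isomorphism $K(\Gie(n,r)^{\Gamma_{\ssf}}) \simeq \prod_d K(\QV_d^{\Gamma_{\ssf}})$. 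The unit classes $[\mathcal{O}_{\QV_d^{\Gamma_{\ssf}}}]$ give orthogonal central idempotents summing to $1$.

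Next, to see that this is the block decomposition, each $K(\QV_d^{\Gamma_{\ssf}})$ has to have no nontrivial idempotents. I would use the rank map $K(\QV_d^{\Gamma_{\ssf}}) \to \C$, i.e. restriction to a point, which is well defined because the component is connected. Its kernel consists of nilpotent elements: the $\gamma$-filtration, or equivalently the Chern character to $H^{*}$ via the known cohomology/K-theory comparison for quiver varieties, shows that rank-zero classes are nilpotent. Quiver varieties have cohomology concentrated in even degrees and no odd cohomology, so $K(\QV_d^{\Gamma_{\ssf}})\otimes\C \simeq H^*(\QV_d^{\Gamma_{\ssf}},\C)$ as algebras up to the grading filtration. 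The latter is a finite-dimensional local algebra, since $H^0=\C$ and positive degrees are nilpotent. A finite-dimensional local algebra has only the idempotents $0$ and $1$, so each summand is indecomposable. Finite-dimensionality itself follows from the paving argument of \cref{lem:free-mod}, applied after specialization.

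The main obstacle is this locality step. Specifically, one must make sure that $K(\QV_d^{\Gamma_{\ssf}})$, taken with $\C$-coefficients, is genuinely finite-dimensional, with the augmentation ideal nilpotent. I would first try to establish this through the Białynicki-Birula paving: it makes $K$ free of finite rank, and the Chern character identifies $K$ with even cohomology filtered by degree, which gives nilpotence of the augmentation ideal. Uniqueness of the block decomposition of an Artinian ring then finishes the argument.
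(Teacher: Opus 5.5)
Your proposal is correct and follows the same route as the paper. The components $\QV_d^{\Gamma_{\ssf}}$ of the smooth fixed locus, indexed via \cite[Theorem 2.15]{Pae26}, are open and closed, so the classes $[\mathcal{O}_{\QV_d^{\Gamma_{\ssf}}}]$ are orthogonal idempotents summing to $1$ and split $K$ as a product; the paper only asserts these idempotents are primitive, whereas you justify it with the rank map whose kernel is nil. That step already suffices on its own (e.g.\ via the ring isomorphism $\mathrm{ch}\colon K(X)_{\mathbb{Q}}\simeq A^*(X)_{\mathbb{Q}}$ for smooth quasi-projective $X$), so the finite-dimensionality, paving and odd-cohomology detour is unnecessary.
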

\begin{proof}
If $X$ is a smooth algebraic variety with connected components $X_i$, then the structure sheaf of $X$ decomposes:
\begin{equation*}
    \mathcal{O}_X = \prod_i{\mathcal{O}_{X_i}}.
\end{equation*}
Indeed, every $X_i$ is open in $X$ in view of \cite[Lemma 5.9.6, Lemma 5.7.11]{stacks}. On the level of $K$-theory, we obtain a decomposition of $[\mathcal{O}_X]\in K(X)$ as a sum of primitive idempotents, giving the block decomposition of $K(X)$. We can now conclude thanks to \cite[Theorem 2.15]{Pae26}, which gives the decomposition into connected components of $\Gie(n,r)^{\Gamma_{\ssf}}$. 
\end{proof}


In particular, we have the following result identifying $K$-theories of affine type $A$ quiver varieties with centers of the corresponding blocks of $H_{n,r}(\ell,\ssf)$.
\begin{thm}
    \label{thm:blocks}
     Take $d \in \mathcal{A}^n_{\Gamma,\ssf}$. Assuming \cref{conj: iso cocenter and K theory of L}, we have a natural isomorphism of $\C$-algebras:
    \begin{equation*}
        K\left(\QV_d^{\Gamma_{\ssf}}\right) \simeq Z(B_d).
    \end{equation*}
\end{thm}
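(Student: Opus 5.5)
The plan is to specialize \cref{main_thm} at $\boldsymbol{q}=(\zeta_\ell,\zeta_\ell^{\ssf_1},\dots,\zeta_\ell^{\ssf_r})$. Under \cref{conj: iso cocenter and K theory of L}, this identifies the center of $H_{n,r}(\ell,\ssf)$ with $K(\Gie(n,r)^{\Gamma_{\ssf}})$. We then compare the two block decompositions: the one of the center coming from \cref{block decp}, and the one of \cref{prop:raphael_block_upgrade_at_q_root_of_unity}.

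First, \cref{prop:our prop claiming that if conj holds then JM is equal to center} gives $\ZJM=Z(\Heck_{n,r})$ over $\mathcal{R}$, and \cref{main_thm} gives $K_{\mathcal{T}}(\Gie(n,r))\simeq Z(\Heck_{n,r})$. We tensor with $\C$ via the evaluation $\mathcal{R}\to\C$. On the geometric side, \cref{lem:free-mod} together with \cite[Theorem 4.3]{Edidin-Graham-algebraic_cycles_K-theory} identifies $K_{\mathcal{T}}(\Gie(n,r))\otimes_{\mathcal{R}}\C$ with $K(\Gie(n,r)^{\Gamma_{\ssf}})$. On the algebraic side, we need $Z(\Heck_{n,r})\otimes_{\mathcal{R}}\C\simeq Z(H_{n,r}(\ell,\ssf))$.

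This base change is the main obstacle, since centers need not commute with specialization. The approach is to use the perfect pairing \cref{eq:pairing between center and cocenter} for the symmetric algebra $\Heck_{n,r}$ over $\mathcal{R}$. By \cite[Theorem 1.4.1]{HuShi}, the cocenter is $\mathcal{R}$-free with a basis compatible with specialization, and its specialization is the cocenter of $H_{n,r}(\ell,\ssf)$, because $[\,,]$ commutes with right-exact base change. Hence $Z(\Heck_{n,r})=\operatorname{Tr}(\Heck_{n,r})^*$ specializes to $\operatorname{Tr}(H_{n,r}(\ell,\ssf))^*=Z(H_{n,r}(\ell,\ssf))$, the last equality holding since the specialized $\tau$ is still a symmetrizing form. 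Equivalently, both sides are free of rank $|\mathcal{P}^r_n|$, and the specialization map is injective by duality.

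Next, the composite isomorphism $Z(H_{n,r}(\ell,\ssf))\simeq K(\Gie(n,r)^{\Gamma_{\ssf}})$ is an algebra isomorphism, so it matches primitive idempotents. On the Hecke side these are the block idempotents $\epsilon_i$, giving $Z=\bigoplus Z(B_i)$ by the preceding lemma. On the geometric side they are the classes $[\mathcal{O}_{\QV_d^{\Gamma_{\ssf}}}]$, by \cref{prop:raphael_block_upgrade_at_q_root_of_unity}. Since each $K(\QV_d^{\Gamma_{\ssf}})$ is local (the component is connected), this correspondence is a bijection.

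Finally, to make the bijection $d\mapsto B_d$ the natural one of \cite{Pae26}, we compare through $\sigma$ and $\iota^*$. A $\mathcal{T}$-fixed point $\lbb$ lies in the component $\QV_d^{\Gamma_{\ssf}}$ exactly when the residue multiset of $\lbb$ (contents shifted by $\ssf$, taken modulo $\ell$) equals $d$. The Specht module $S^{\lbb}$ lies in block $B_d$ by the Lyle--Mathas residue criterion. Moreover, the idempotent $[\mathcal{O}_{\QV_d}]$ restricts to $1$ exactly at fixed points in that component. This matches the two labelings and yields $K(\QV_d^{\Gamma_{\ssf}})\simeq Z(B_d)$.
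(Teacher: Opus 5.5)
Your proposal is correct and takes the same route as the paper. Both specialize \cref{main_thm} at the root of unity, use \cref{conj: iso cocenter and K theory of L} (through \cref{prop:our prop claiming that if conj holds then JM is equal to center}) to replace the Jucys--Murphy center by the full center, and match primitive idempotents, i.e.\ blocks with connected components. You also supply details the paper's two-line proof leaves implicit: the base change $Z(\Heck_{n,r})\otimes_{\mathcal{R}}\C\simeq Z(H_{n,r}(\ell,\ssf))$ via the symmetrizing form and freeness of the cocenter, the locality of $K$ of a connected component, and the matching of the labels $d$ through $\sigma$ and $\iota^*$.
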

\begin{proof}
    This follows from the unicity of blocks, up to permutation and \cref{main_thm}. Indeed, after specializing to roots of unity, \cref{main_thm} gives an explicit identification of $K\left(\QV_d^{\Gamma_{\ssf}}\right)$ with $Z(B_d)$ (under the assumption that \cref{conj: iso cocenter and K theory of L} holds).
\end{proof}

\begin{rmk}
Even without assuming \cref{conj: iso cocenter and K theory of L}, the same argument implies that $K\left(\QV_d^{\Gamma_{\ssf}}\right)$ surjects onto the Jucys-Murphy’s center of $B_d$. 
\end{rmk}



  \bibliographystyle{alpha}
  \bibliography{biblo}

\end{document}